\date{}
\newtheorem{theorem}{Theorem}[section]
\newtheorem{proposition}[theorem]{Proposition}
\newtheorem{lemma}[theorem]{Lemma}
\newtheorem{fact}[theorem]{Fact}
\newtheorem{corollary}[theorem]{Corollary}
\newtheorem{definition}[theorem]{Definition}
\theoremstyle{definition}
\newtheorem{question}[theorem]{Question}
\newtheorem{example}[theorem]{Example}
\newtheorem{remark}[theorem]{Remark}
\newcommand{\R}{\mathbb R}
\newcommand{\Q}{\mathbb Q}
\newcommand{\FIAT}{\mathrm{fiAT}}
\newcommand{\AT}{\mathrm{AT}}
\numberwithin{equation}{section}
\title{The Bridge Theorem for totally disconnected LCA groups}
\author{Dikran Dikranjan
\\{\footnotesize {\tt  dikran.dikranjan@uniud.it}} 
\\{\footnotesize Dipartimento di Matematica e Informatica,}
\\{\footnotesize Universit\`{a} di Udine,}
\\{\footnotesize Via delle Scienze, 206 - 33100 Udine, Italy} 
 \and Anna Giordano Bruno
\\{\footnotesize {\tt  anna.giordanobruno@uniud.it}} 
\\{\footnotesize Dipartimento di Matematica e Informatica,}
\\{\footnotesize Universit\`{a} di Udine,}
\\{\footnotesize Via delle Scienze, 206 - 33100 Udine, Italy}
 }
\begin{document}

\maketitle

\begin{abstract}
For a totally disconnected locally compact abelian group, we prove that the topological entropy of a continuous endomorphism coincides with the algebraic entropy of the dual endomorphism with respect to the Pontryagin duality. Moreover, this result is extended to all locally compact abelian groups under the assumption of additivity with respect to some fully invariant subgroups for both the topological and the algebraic entropy.
\end{abstract}

\bigskip
\noindent\textrm{\small{Key words: topological entropy, algebraic entropy, totally disconnected group, locally compact abelian group, continuous endomorphism, Pontryagin duality\\
2010 AMS Subject Classification: primary 37B40, 22B05, 22D40; secondary 20K30, 54H11, 54H20.}}

\section{Introduction}

In \cite{B} Bowen defined the topological entropy for uniformly continuous self-maps of metric spaces, and this notion was extended by Hood in \cite{hood} for uniformly continuous self-maps of uniform spaces. For continuous self-maps of compact metric spaces this topological entropy coincides with the one previously introduced by Adler, Konheim and McAndrew in \cite{AKM} for continuous self-maps of compact spaces (see \cite[Corollary 2.14]{DSV}).
Hood's extension of Bowen's definition of topological entropy applies in particular to continuous endomorphisms of locally compact abelian groups, and can be given in the following way, as explained in detail in \cite{DSV}. For a locally compact abelian group $G$, denote by $\mathcal C(G)$ the family of compact neighborhoods of $0$ in $G$. Let $\mu$ be a Haar measure on $G$ and $\phi:G\to G$ a continuous endomorphism. For every $U\in\mathcal C(G)$ and every positive integer $n$, the \emph{$n$-th $\phi$-cotrajectory of $U$} is
$$C_n(\phi,U)=U\cap\phi^{-1}(U)\cap\ldots\cap \phi^{-n+1}(U).$$
The \emph{topological entropy} of $\phi$ with respect to $U$ is
\begin{equation}\label{Htop}
H_{top}(\phi,U)=\limsup_{n\to\infty}\frac{-\log\mu(C_n(\phi,U))}{n}
\end{equation}
and it does not depend on the choice of the Haar measure $\mu$ on $G$.
The \emph{topological entropy} of $\phi$ is
$$h_{top}(\phi)=\sup\{H_{top}(\phi,U):U\in\mathcal C(G)\}.$$

\bigskip
Using ideas briefly sketched in \cite{AKM}, Weiss developed in \cite{W} the definition of algebraic entropy for endomorphisms of torsion abelian groups (this case was further developed in \cite{DGSZ}). 
Then Peters modified this definition in \cite{Pet} for automorphisms of arbitrary abelian groups, and his approach was extended to all endomorphisms in \cite{DG2}.
In \cite{Pet1} Peters gave a further generalization of the entropy he defined in \cite{Pet}, using Haar measure, for topological automorphisms of locally compact abelian groups. 
Finally, in \cite{V} Virili modified Peters' definition, in the same way as done in \cite{DG2} for the discrete case, obtaining a new notion of algebraic entropy for continuous endomorphisms $\phi$ of locally compact abelian groups $G$.
For every $U\in\mathcal C(G)$ and every positive integer $n$, the \emph{$n$-th $\phi$-trajectory of $U$} is
$$T_n(\phi,U)=U+\phi(U)+\ldots+ \phi^{n-1}(U).$$
The \emph{algebraic entropy of $\phi$ with respect to $U$} is
\begin{equation}\label{Halg}
H_{alg}(\phi,U)=\limsup_{n\to\infty}\frac{\log\mu(T_n(\phi,U))}{n};
\end{equation}
as for the topological entropy, it is proved and easy to see that it does not depend on the choice of the Haar measure $\mu$ on $G$.
The \emph{algebraic entropy} of $\phi$ is
$$h_{alg}(\phi)=\sup\{H_{alg}(\phi,U):U\in\mathcal C(G)\}.$$

\smallskip
Let $\mathcal B(G)$ be the subfamily of $\mathcal C(G)$ consisting of all compact open subgroups of $G$. 
It is worth noting that when $U\in \mathcal B(G)$ the use of Haar measure in the definitions in \eqref{Htop} and \eqref{Halg} can be avoided and the limits superior become actually limits: 

\begin{fact}\label{no-mu}\emph{\cite{DG-islam}}
Let $G$ be a locally compact abelian group, $\phi:G\to G$ a continuous endomorphism and $U\in\mathcal B(G)$. Then:
$$H_{top}(\phi,U)=\lim_{n\to \infty}\frac{\log[U:C_n(\phi,U)]}{n}\ \text{and}\ H_{alg}(\phi,U)=\lim_{n\to\infty}\frac{\log[T_n(\phi,U):U]}{n}.$$
\end{fact}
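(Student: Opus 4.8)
The plan is to exploit the special structure forced by $U\in\mathcal B(G)$. Since $U$ is a compact open subgroup and $\phi$ is a continuous endomorphism, each $\phi^{-i}(U)$ is an open subgroup, so every cotrajectory $C_n(\phi,U)=\bigcap_{i=0}^{n-1}\phi^{-i}(U)$ is a compact open subgroup contained in $U$; dually, each $\phi^i(U)$ is a compact subgroup, so every trajectory $T_n(\phi,U)=\sum_{i=0}^{n-1}\phi^i(U)$ is a compact open subgroup containing $U$. In particular all the indices $[U:C_n(\phi,U)]$ and $[T_n(\phi,U):U]$ are finite, being indices of open subgroups in compact groups.

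First I would reduce the measure to these indices. For compact open subgroups $K\le L$ one has $\mu(L)=[L:K]\,\mu(K)$, hence $\mu(C_n(\phi,U))=\mu(U)/[U:C_n(\phi,U)]$ and $\mu(T_n(\phi,U))=[T_n(\phi,U):U]\,\mu(U)$. Substituting into \eqref{Htop} and \eqref{Halg} and dividing by $n$, the terms $\pm\frac{\log\mu(U)}{n}$ tend to $0$, so
\begin{equation*}
H_{top}(\phi,U)=\limsup_{n\to\infty}\frac{\log[U:C_n(\phi,U)]}{n},\qquad H_{alg}(\phi,U)=\limsup_{n\to\infty}\frac{\log[T_n(\phi,U):U]}{n}.
\end{equation*}
This already gives the measure-free formulas; it remains to upgrade the two limits superior to genuine limits.

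For this I would pass to the increments. Telescoping the chains $U=C_1(\phi,U)\supseteq C_2(\phi,U)\supseteq\cdots$ and $U=T_1(\phi,U)\subseteq T_2(\phi,U)\subseteq\cdots$ yields
\begin{equation*}
\log[U:C_n(\phi,U)]=\sum_{k=1}^{n-1}\log[C_k(\phi,U):C_{k+1}(\phi,U)],\qquad \log[T_n(\phi,U):U]=\sum_{k=1}^{n-1}\log[T_{k+1}(\phi,U):T_k(\phi,U)].
\end{equation*}
The heart of the argument, and the main obstacle, is to show that the two sequences of incremental indices are non-increasing in $k$. Using $C_{k+1}(\phi,U)=C_k(\phi,U)\cap\phi^{-k}(U)$ together with $\phi(C_{k+1}(\phi,U))\subseteq C_k(\phi,U)$, one checks that $\phi$ induces a well-defined \emph{injective} homomorphism $C_{k+1}(\phi,U)/C_{k+2}(\phi,U)\to C_k(\phi,U)/C_{k+1}(\phi,U)$, whence $[C_{k+1}(\phi,U):C_{k+2}(\phi,U)]\le[C_k(\phi,U):C_{k+1}(\phi,U)]$. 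Dually, from $T_{k+1}(\phi,U)=T_k(\phi,U)+\phi^k(U)$ and $\phi(T_k(\phi,U))\subseteq T_{k+1}(\phi,U)$ one obtains a well-defined \emph{surjective} homomorphism $T_{k+1}(\phi,U)/T_k(\phi,U)\to T_{k+2}(\phi,U)/T_{k+1}(\phi,U)$, whence $[T_{k+2}(\phi,U):T_{k+1}(\phi,U)]\le[T_{k+1}(\phi,U):T_k(\phi,U)]$.

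Each incremental index is a positive integer, so a non-increasing sequence of them is eventually constant, hence convergent; by Ces\`aro's theorem the averages $\frac1n\sum_{k=1}^{n-1}(\cdots)$ converge to that same value, so both limits superior above are in fact limits, completing the proof. The only delicate points are the verifications that the maps between consecutive quotients are well defined and injective (resp.\ surjective); these follow by tracking membership in the defining intersections (resp.\ sums). For instance, injectivity of the cotrajectory map amounts to the observation that $x\in C_{k+1}(\phi,U)$ with $\phi(x)\in C_{k+1}(\phi,U)$ forces $\phi^{k+1}(x)\in U$, i.e.\ $x\in C_{k+2}(\phi,U)$; surjectivity of the trajectory map follows because $\phi$ sends the generating coset $\phi^k(U)+T_k(\phi,U)$ of $T_{k+1}(\phi,U)/T_k(\phi,U)$ onto the generating coset $\phi^{k+1}(U)+T_{k+1}(\phi,U)$ of $T_{k+2}(\phi,U)/T_{k+1}(\phi,U)$.
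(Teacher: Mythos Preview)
Your argument is correct. The paper itself does not prove this fact---it is imported from \cite{DG-islam} without proof---so there is no ``paper's approach'' to compare against here. Your route (replace Haar measure by indices via $\mu(L)=[L:K]\,\mu(K)$, then show the incremental indices $[C_k:C_{k+1}]$ and $[T_{k+1}:T_k]$ form non-increasing integer sequences, hence eventually constant, and apply Ces\`aro) is the standard one and matches what is done in the cited source. The only points worth tightening are cosmetic: you might state explicitly that $C_n(\phi,U)$ is compact because it is closed in the compact set $U$ (each $\phi^{-i}(U)$ is closed, not merely open), and that $T_n(\phi,U)$ is compact as a finite sum of compact subsets under the continuous group operation. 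The injectivity/surjectivity verifications for the induced maps on successive quotients are exactly as you describe.
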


\smallskip
It follows from these formulas, that $H_{top}(\phi,\{0\})= 0$ in case $G$ is discrete, and $H_{alg}(\phi,G)= 0$ in case $G$ is compact. 
For more details on the properties of the topological and the algebraic entropy, as well as for their relations with other topics from geometric group theory, number theory and ergodic theory, see the already cited papers and also the survey papers \cite{DG-islam,DG_PC}.

\bigskip
Since its origin, the algebraic entropy was introduced in connection to the topological entropy by means of Pontryagin duality.
For a locally compact abelian group $G$ we denote its Pontryagin dual group by $\widehat G$ and for $\phi:G\to G$ a continuous endomorphism $\widehat\phi:\widehat G\to \widehat G$ is the dual endomorphism of $\phi$. This defines a contravariant functor $\;\;\widehat{}:\mathcal L\to \mathcal L$
(the Pontryagin duality functor), where $\mathcal L$ is the category of all locally compact abelian groups. In Section \ref{pd-sec} we give the basic definitions concerning Pontryagin duality and known related properties needed in this paper.

For a subcategory $\mathfrak X$ of $\mathcal L$ we denote by $\widehat{\mathfrak X}$ the image of the subcategory $\mathfrak X$ under the Pontryagin duality functor (i.e., $\widehat{\mathfrak X}$ has as objects and morphisms all $\widehat X$ and $\widehat f$,  when $X$ and $f$ run over the objects and morphisms of $\mathfrak X$, respectively). Note that $\widehat{\mathcal L}=\mathcal L$. Moreover, the subcategory $\mathfrak X$ is full if and only if the subcategory $\widehat{\mathfrak X}$ is full. 

Of particular interest are the non-full subcategories defined as follows. For a full subcategory $\mathfrak X$ of $\mathcal L$ let $\mathfrak X_{iso}$ be the subcategory of $\mathcal L$ having as objects all objects of the subcategory $\mathfrak X$ and having as morphisms only the isomorphisms in $\mathfrak X$. 

Here is a list of the full subcategories of $\mathcal L$ that frequently appear in the sequel: 
\begin{itemize}
\item $\mathfrak C$ has as objects all compact groups of $\mathcal L$;
\item $\mathfrak D$ has as objects all discrete groups of $\mathcal L$;
\item $\mathfrak Ctd$ has as objects all compact totally disconnected groups of $\mathcal L$;
\item $\mathfrak Cmet$ has as objects all compact metrizable groups of $\mathcal L$;
\item $\mathcal L td$ has as objects all totally disconnected groups of $\mathcal L$;
\end{itemize}
It is a well known fact that $$\widehat{\mathfrak C}=\mathfrak D,\ \ \widehat{\mathfrak Cmet}=\mathfrak Dcnt\ \ \text{and}\ \ \widehat{\mathfrak Ctd}=\mathfrak Dtor,$$ where $\mathfrak Dtor$ and $\mathfrak Dcnt$ are the full subcategories of $\mathfrak D$ with objects all torsion groups and all countable groups, respectively. Moreover, $$\widehat{\mathfrak Ltd}=\mathfrak Lcc,$$ where $\mathcal L cc$ has as objects all compactly covered groups of $\mathcal L$; we say that a topological group $G$ is compactly covered if each element of $G$ is contained in some compact subgroup of $G$. 

\begin{equation*}
\xymatrix{
&& \mathcal L\ar@/^/[rrrrr]^{\widehat{}} \ar@{-}[dr]\ar@{-}[dl] & & & && \mathcal L \ar@{-}[dr]\ar@{-}[dl] & \\
&\mathfrak C \ar@{-}[dr] \ar@{-}[dl] & & \mathcal Ltd \ar@{-}[dl] & && \mathfrak D \ar@{-}[dl] \ar@{-}[dr] & & \mathcal Lcc \ar@{-}[dl]\\
\mathfrak Cmet&&\mathfrak Ctd& & & \mathfrak Dcnt & &\mathfrak Dtor&
}
\end{equation*}

\medskip


\begin{definition}\label{Def_BT}
For a subcategory $\mathfrak X$ of the category $\mathcal L$ of all locally compact abelian groups, we say that $\mathfrak X$ \emph{satisfies the Bridge Theorem} \phantom{$\!\!\! \!\! \!\!\!|^{|^|}$} if $h_{top}(\phi)=h_{alg}(\widehat\phi)$ holds for every endomorphism $\phi:G\to G$ in $\mathfrak X$. 
\end{definition}

\smallskip
As we saw above, $h_{top}$ vanishes on $\mathfrak D$, while $h_{alg}$ vanishes on $\mathfrak C$ (this a consequence of Lemma \ref{base}). Hence, the subcategory $\mathfrak D$ of $\mathcal L$ satisfies the Bridge Theorem.

Weiss proved in \cite{W} that 
$\mathfrak Ctd$ satisfies the Bridge Theorem.
%
%
%
Moreover, Peters proved the same conclusion in \cite{Pet} for the category $\mathfrak Cmet_{iso}$.
Weiss Bridge Theorem and Peters Bridge Theorem were recently extended by the following theorem from \cite{DG-bt} to the whole category $\mathfrak C$,
in particular eliminating the hypothesis of total disconnectedness in Weiss Bridge Theorem.
%
%
\begin{theorem}\emph{\cite{DG-bt}}\label{DG-bt}
The category $\mathfrak C$ satisfies the Bridge Theorem. 
\end{theorem}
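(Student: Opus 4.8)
The plan is to reduce the Bridge Theorem for $\mathfrak C$ to the two ``pure'' cases — the totally disconnected one and the connected one — by means of the Addition Theorems for the two entropies, and then to dispose of these two cases separately. Throughout I fix $G\in\mathfrak C$, write $K=\widehat G$ for the (discrete) dual and $\psi=\widehat\phi\colon K\to K$, so that the goal is $h_{top}(\phi)=h_{alg}(\psi)$.

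First I would exploit the canonical short exact sequence $0\to c(G)\to G\to G/c(G)\to 0$, where $c(G)$ denotes the connected component of $0$. Since $\phi$ is continuous, $\phi(c(G))$ is a connected set containing $0$, hence $\phi(c(G))\subseteq c(G)$, so $c(G)$ is $\phi$-invariant and $\phi$ induces $\bar\phi$ on $G/c(G)$. Dually, $c(G)=\mathrm{Ann}(t(K))$, which yields the dual exact sequence $0\to t(K)\to K\to K/t(K)\to 0$ with $\widehat{G/c(G)}=t(K)$ and $\widehat{c(G)}=K/t(K)$; here the torsion subgroup $t(K)$ is $\psi$-invariant and $\psi$ induces $\bar\psi$ on $K/t(K)$. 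The two Addition Theorems — for $h_{top}$ on compact groups with respect to the closed invariant subgroup $c(G)$, and for $h_{alg}$ on discrete groups with respect to $t(K)$ — then give
\begin{equation*}
h_{top}(\phi)=h_{top}(\phi\restriction_{c(G)})+h_{top}(\bar\phi),\qquad h_{alg}(\psi)=h_{alg}(\psi\restriction_{t(K)})+h_{alg}(\bar\psi).
\end{equation*}

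Next I would match the summands through duality. The quotient $G/c(G)$ is compact and totally disconnected with dual $t(K)$, and $\widehat{\bar\phi}=\psi\restriction_{t(K)}$; Weiss' Bridge Theorem for $\mathfrak{Ctd}$ therefore yields $h_{top}(\bar\phi)=h_{alg}(\psi\restriction_{t(K)})$, so the two totally disconnected summands already coincide. Consequently the whole statement collapses to the equality of the two remaining summands, i.e.\ to the Bridge Theorem for the connected compact group $c(G)$, or equivalently (passing to duals) to the Bridge Theorem for endomorphisms of torsion-free discrete abelian groups. For this connected case my plan is a double reduction: first reduce to the metrizable case, using that $K/t(K)$ is the directed union of its countable $\bar\psi$-invariant subgroups together with the continuity of $h_{alg}$ along direct limits on the algebraic side and of $h_{top}$ along the corresponding inverse limits of compact metrizable quotients on the topological side, so that it suffices to treat $K$ countable; then reduce to finite rank, so that after tensoring with $\mathbb Q$ one works with a linear endomorphism of a finite-dimensional $\mathbb Q$-vector space, where both entropies are computed by a Yuzvinski-type formula — the Algebraic Yuzvinski Formula on the discrete side and the classical Yuzvinskii formula for the dual solenoidal automorphism on the compact side — each equal to the logarithm of the Mahler measure of the characteristic polynomial, hence to one another.

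I expect the connected case to be the main obstacle, and within it the finite-rank reduction together with the invocation of the two Yuzvinski formulas: the difficulty is that a single $\psi$-invariant subgroup of $K$ that is finitely generated as a $\psi$-module need not have finite rank, so controlling the passage to the divisible hull and to finite-dimensional $\mathbb Q$-vector spaces without altering either entropy is delicate. A secondary but genuine technical cost is establishing, in the required generality, the Addition Theorem for the topological entropy on compact abelian groups, on which the entire reduction rests.
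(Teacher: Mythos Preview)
The present paper does not contain a proof of this theorem: it is quoted from \cite{DG-bt} and used here only as an input, notably in the proof of Theorem~\ref{AT->BT}. There is therefore no in-paper argument to compare your proposal against.

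For what it is worth, your outline is in line with the strategy actually followed in \cite{DG-bt}: decompose along $c(G)$ using the two Addition Theorems, settle the totally disconnected quotient by Weiss, and reduce the connected kernel to a Yuzvinski/Mahler-measure computation on the torsion-free discrete side after a countability reduction. Two small corrections to your assessment of the costs: the Addition Theorem for $h_{top}$ on compact groups is not something you must establish but an available input (the paper itself cites \cite{B,Y} for $\AT_{top}$ in $\mathfrak C$ and \cite{DG2} for $\AT_{alg}$ in $\mathfrak D$); and the finite-rank step in \cite{DG-bt} is not carried out by locating finite-rank $\psi$-invariant subgroups, which as you note need not exist, but by passing to the divisible hull and exploiting $\AT_{alg}$ together with the Algebraic Yuzvinski Formula on $\Q^n$. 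Your identification of the connected/torsion-free case as the crux is accurate.
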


In this paper, and in particular in Theorem \ref{BT} below, we extend Weiss Bridge Theorem in another direction, that is relaxing the condition of compactness to local compactness. 
%
%
Indeed, in Section \ref{entropy-sec} we see that if $\phi:G\to G$ is a morphism in $\mathcal L td$,  the computation of $h_{top}(\phi)$ can be limited to taking the supremum of $H_{top}(\phi,U)$ with $U$ ranging in $\mathcal B(G)$, and respectively for $h_{alg}(\widehat\phi)$ it suffices to take the supremum of $H_{alg}(\widehat\phi,V)$ with $V$ in $\mathcal B(\widehat G)$. 
Applying Fact \ref{no-mu}, we find the precise relation between $H_{top}(\phi,U)$ and $H_{alg}(\widehat\phi,V)$ (see Proposition \ref{C_n-T_n*}), that permits to prove the following result in Section \ref{bt-sec}.

\begin{theorem}\label{BT} 
The subcategory $\mathcal L td$ of $\mathcal L$ satisfies the Bridge Theorem. 
\end{theorem}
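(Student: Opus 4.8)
The plan is to reduce the identity $h_{top}(\phi)=h_{alg}(\widehat\phi)$ to a pointwise comparison on compact open subgroups, where van Dantzig's theorem guarantees that $\mathcal B(G)$ is a base at $0$ in the totally disconnected group $G$. First I would set up the annihilator correspondence. For $U\in\mathcal B(G)$ the annihilator $U^{\perp}$ of $U$ in $\widehat G$ satisfies $U^{\perp}\cong\widehat{G/U}$ and $\widehat G/U^{\perp}\cong\widehat U$; since $U$ is compact and open, $\widehat U$ is discrete and $\widehat{G/U}$ is compact, so $U^{\perp}\in\mathcal B(\widehat G)$. The assignment $U\mapsto U^{\perp}$ is then an inclusion-reversing bijection of $\mathcal B(G)$ onto $\mathcal B(\widehat G)$, and for $A\le B$ in $\mathcal B(G)$ the isomorphism $A^{\perp}/B^{\perp}\cong\widehat{B/A}$ together with the finiteness of $B/A$ yields the index duality $[B:A]=[A^{\perp}:B^{\perp}]$.

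The heart of the argument is the identity $C_n(\phi,U)^{\perp}=T_n(\widehat\phi,U^{\perp})$, which is the content of Proposition \ref{C_n-T_n*}. I would derive it from the two standard annihilator rules $(\phi^{-1}(U))^{\perp}=\overline{\widehat\phi(U^{\perp})}$ and $(A\cap B)^{\perp}=\overline{A^{\perp}+B^{\perp}}$, noting that here both closures are superfluous: $U^{\perp}$ is compact, so its continuous image $\widehat\phi(U^{\perp})$ is compact, hence closed, and a finite sum of compact subgroups is again compact, hence closed. Iterating over $i=0,\ldots,n-1$ and using $\widehat{\phi^{\,i}}=\widehat\phi^{\,i}$ gives
$$C_n(\phi,U)^{\perp}=\Big(\bigcap_{i=0}^{n-1}\phi^{-i}(U)\Big)^{\perp}=\sum_{i=0}^{n-1}\widehat\phi^{\,i}(U^{\perp})=T_n(\widehat\phi,U^{\perp}).$$
Since $C_n(\phi,U)$ is a compact open subgroup contained in $U$, applying the index duality to $C_n(\phi,U)\le U$ produces $[U:C_n(\phi,U)]=[T_n(\widehat\phi,U^{\perp}):U^{\perp}]$, and Fact \ref{no-mu} then converts both sides into genuine limits, giving the pointwise equality $H_{top}(\phi,U)=H_{alg}(\widehat\phi,U^{\perp})$ for every $U\in\mathcal B(G)$.

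To conclude I would invoke the two reductions established in Section \ref{entropy-sec}: since $G$ is totally disconnected, $h_{top}(\phi)=\sup\{H_{top}(\phi,U):U\in\mathcal B(G)\}$, while for the dual endomorphism $h_{alg}(\widehat\phi)=\sup\{H_{alg}(\widehat\phi,V):V\in\mathcal B(\widehat G)\}$. Because $U\mapsto U^{\perp}$ maps $\mathcal B(G)$ bijectively onto $\mathcal B(\widehat G)$, the pointwise equality above lets me rewrite the second supremum over the same index set, whence
$$h_{top}(\phi)=\sup_{U\in\mathcal B(G)}H_{top}(\phi,U)=\sup_{U\in\mathcal B(G)}H_{alg}(\widehat\phi,U^{\perp})=h_{alg}(\widehat\phi).$$

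I expect the genuinely delicate point to be the second reduction rather than the annihilator calculus. Whereas $\mathcal B(G)$ is cofinal in $\mathcal C(G)$ purely by total disconnectedness of $G$, the dual group $\widehat G$ is merely compactly covered (it lies in $\mathcal Lcc$) and need not be totally disconnected, so $\mathcal B(\widehat G)$ is in general not a neighborhood base of $0$ and one cannot cut $h_{alg}(\widehat\phi)$ down to compact open subgroups by a naive cofinality argument. Securing that reduction is where the main work lies, and it is the step I would carry out with most care in Section \ref{entropy-sec} before assembling the proof above.
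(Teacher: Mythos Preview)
Your argument is correct and follows the paper's route essentially step for step: the annihilator bijection $\mathcal B(G)\leftrightarrow\mathcal B(\widehat G)$, the identity $C_n(\phi,U)^{\perp}=T_n(\widehat\phi,U^{\perp})$, the index duality via Lemma~\ref{A/B}, the pointwise equality $H_{top}(\phi,U)=H_{alg}(\widehat\phi,U^{\perp})$ from Fact~\ref{no-mu}, and finally the two reductions of Theorem~\ref{last:-)}. One small slip in your closing commentary: you have the roles of ``local base'' and ``cofinal'' reversed---for $h_{top}$ one needs $\mathcal B(G)$ to be a local base at $0$ (this is van Dantzig, and $\mathcal B(G)$ is \emph{not} cofinal in $\mathcal C(G)$ in general, e.g.\ $G=\Z$), while for $h_{alg}$ one needs $\mathcal B(\widehat G)$ to be cofinal in $\mathcal C(\widehat G)$, which is exactly what Proposition~\ref{cofinal} supplies for the compactly covered group $\widehat G$; your proof itself cites these reductions correctly, so this does not affect its validity.
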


We leave open the problem of the validity of the Bridge Theorem in the general case:

\begin{question}\label{Ques}
Which subcategories $\mathfrak X$ of $\mathcal L$ satisfy the Bridge Theorem?
\end{question}

Note that a positive answer is known in some particular cases, for example when $\mathfrak X = \mathfrak V _K$ is the full subcategory of all finite dimensional vector spaces over the locally compact field $K$, where $K$ is either $\R$ or $\Q_p$, for some prime $p$ (see Example \ref{Rn} below and \cite{DG-islam}, respectively).

\medskip
We see below that a positive answer is possible under the assumption that both the topological and the algebraic entropy are additive with respect to some fully invariant closed subgroups. To be more precise we introduce the following terminology. 

If $h$ is one of $h_{top}$ or $h_{alg}$, we say that \emph{the Addition Theorem holds for $h$} in the full subcategory $\mathfrak X$ of the category $\mathcal L$, if 
\begin{equation}\label{ATeq}
h(\phi)=h(\phi\restriction_H)+h(\overline\phi),
\end{equation}
for every endomorphism $\phi:G\to G$ in $\mathfrak X$, and every $\phi$-invariant closed subgroup $H$ of $G$, such that both 
$\phi\restriction_H$ and the continuous endomorphism $\overline\phi:G/H\to G/H$ induced by $\phi$ belong to $\mathfrak X$.
We denote briefly by $\AT_{top}$ (respectively, $\AT_{alg}$) the statement ``the Addition Theorem holds for $h_{top}$" (respectively, ``the Addition Theorem holds for $h_{alg}$"), and we do not specify $\mathfrak X$ when it is the whole $\mathcal L$.

Moreover, we denote by $\FIAT_{top}$ and $\FIAT_{alg}$ the fact that \eqref{ATeq} holds for every pair $G$, $\phi$ as above, and for every fully invariant closed subgroup $H$ of $G$. Clearly, $\AT_{top}$ and $\AT_{alg}$ are much stronger than $\FIAT_{top}$ and $\FIAT_{alg}$ respectively.

\medskip
It is known that $\AT_{top}$ holds in $\mathfrak C$ (see \cite{B,Y}), while $\AT_{alg}$ holds in $\mathfrak D$ (see \cite{DG2}). Moreover, we would like to underline that additivity is one of the most important properties of the topological and the algebraic entropy.

\medskip
Assuming that both $\FIAT_{top}$ and $\FIAT_{alg}$ hold for all locally compact abelian group, one can prove that also the Bridge Theorem holds in general. Indeed, we have the following

\begin{theorem}\label{AT->BT}
Assume that $\FIAT_{top}$ and $\FIAT_{alg}$ hold. Then $\mathcal L$ satisfies the Bridge Theorem.
\end{theorem}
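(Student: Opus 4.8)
The plan is to reduce the general case to the three situations in which the Bridge Theorem is already available — totally disconnected groups (Theorem \ref{BT}), compact groups (Theorem \ref{DG-bt}), and finite-dimensional real vector groups (Example \ref{Rn}) — by peeling off a chain of fully invariant closed subgroups and invoking the two Addition Theorems on $G$ and on its dual $\widehat G$ simultaneously. The crucial observation is that the hypothesis $\FIAT_{top}$ together with $\FIAT_{alg}$ lets one split the topological entropy on $G$ and the algebraic entropy on $\widehat G$ along the \emph{same} subgroup, viewed through Pontryagin duality.

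The core is a reduction lemma: if $H$ is a fully invariant closed subgroup of $G$ with $\phi(H)\subseteq H$, and the Bridge Theorem holds both for $\phi\restriction_H$ and for the induced endomorphism $\overline\phi$ of $G/H$, then it holds for $\phi$. To prove this I would first record that the annihilator $H^\perp\le\widehat G$ is again fully invariant and closed: closedness is standard, while full invariance follows from the anti-isomorphism $\End(\widehat G)\cong\End(G)$ furnished by duality, since every endomorphism of $\widehat G$ is some $\widehat\psi$ and $\psi(H)\subseteq H$ forces $\widehat\psi(H^\perp)\subseteq H^\perp$. Next I would use the natural identifications $\widehat{G/H}\cong H^\perp$ and $\widehat H\cong\widehat G/H^\perp$, under which $\widehat{\overline\phi}\cong\widehat\phi\restriction_{H^\perp}$ and $\widehat{\phi\restriction_H}\cong\overline{\widehat\phi}$. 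Applying $\FIAT_{top}$ to $(G,\phi,H)$ gives $h_{top}(\phi)=h_{top}(\phi\restriction_H)+h_{top}(\overline\phi)$, whereas applying $\FIAT_{alg}$ to $(\widehat G,\widehat\phi,H^\perp)$ gives $h_{alg}(\widehat\phi)=h_{alg}(\widehat\phi\restriction_{H^\perp})+h_{alg}(\overline{\widehat\phi})=h_{alg}(\widehat{\overline\phi})+h_{alg}(\widehat{\phi\restriction_H})$, using the identifications above and the invariance of both entropies under topological conjugacy. Comparing the two expressions term by term yields the lemma.

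With the lemma in hand I would apply it along the chain $\{0\}\le K\le c(G)\le G$, where $c(G)$ is the connected component of $0$. First, $c(G)$ is fully invariant and closed and $G/c(G)$ is totally disconnected, so by Theorem \ref{BT} the Bridge Theorem holds for $\overline\phi$ on $G/c(G)$; the lemma then reduces the problem to $\phi\restriction_{c(G)}$, i.e.\ to the case of $G$ connected. A connected LCA group satisfies $c(G)\cong\R^n\times K$ with $K$ compact connected, and $K$ is precisely the fully invariant closed subgroup $\mathbf b(c(G))$ of compact elements, with $c(G)/K\cong\R^n$. Applying the lemma a second time to $(c(G),\phi\restriction_{c(G)},K)$ reduces matters to the Bridge Theorem for $\phi\restriction_K$ on the compact group $K$ (Theorem \ref{DG-bt}) and for the induced endomorphism on $\R^n$, which is automatically $\R$-linear and hence covered by Example \ref{Rn}. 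This completes the reduction.

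The main obstacle is the duality bookkeeping in the lemma: one must verify carefully that passing to annihilators preserves full invariance and exchanges the roles of restriction and quotient, so that the single subgroup $H\le G$ controls \emph{both} summands of the Addition Theorem applied to $\widehat\phi$ on $\widehat G$. Once this exchange is established, the remainder is an organized combination of the three known cases along the canonical filtration by the connected component and the maximal compact subgroup, with the isomorphism-invariance of $h_{top}$ and $h_{alg}$ absorbing all the natural identifications.
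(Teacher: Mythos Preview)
Your proposal is correct and follows essentially the same route as the paper: both arguments filter by the fully invariant subgroups $c(G)$ and $B(c(G))$, use Theorem \ref{BT} on the totally disconnected quotient $G/c(G)$, and then on the connected part $c(G)\cong\R^n\times K$ invoke Theorem \ref{DG-bt} on the compact factor and Example \ref{Rn} on the $\R^n$ factor, splicing the pieces via $\FIAT_{top}$ on $G$ and $\FIAT_{alg}$ on $\widehat G$ along the annihilator. The only cosmetic difference is that you isolate the reduction step as a stand-alone lemma, whereas the paper carries out the two applications inline (first for $G=\R^n\times K$, then for general $G$), relying on Lemma \ref{pontr-rem} for the duality bookkeeping you spell out explicitly.
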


The proof of this theorem is given in Section \ref{at-sec}. It relies on a special weaker form of $\FIAT$, that is, with respect to two particular fully invariant subgroups (see Remark \ref{LastRemark} for more details).

\medskip
So one should try to answer the following questions on the general validity of the Addition Theorem in $\mathcal L$.

\begin{question}
\begin{itemize}
\item[(a)] Do $\AT_{top}$ and $\AT_{alg}$ hold?
\item[(b)] Do $\FIAT_{top}$ and $\FIAT_{alg}$ hold?
\end{itemize}
\end{question}

One could try to answer this question first for some subcategory $\mathfrak X$ of $\mathcal L$, and then pass to the general case of the whole category $\mathcal L$. Another case to consider first should be that of very special subgroups, eventually starting from the ones needed in the proof of Theorem \ref{AT->BT}.

\medskip
By the time this paper was accepted, S. Virili \cite{V-BT} obtained an impressive generalization of Peters' version of the Bridge Theorem, pushing much further the technique based on harmonic analysis adopted in \cite{Pet1}. In particular, 
this gives an important contribution towards Question \ref{Ques} by proving that $\mathcal L_{iso}$ satisfies the Bridge Theorem.

\section{Topological entropy and algebraic entropy}\label{entropy-sec}

Let $G$ be a locally compact abelian group and $\phi:G\to G$ a continuous endomorphism. It is clear from the definition that the map $H_{top}(\phi,-)$ is antimonotone, that is,
\begin{center}
if $U,V\in\mathcal C(G)$ and $U\subseteq V$, then $H_{top}(\phi,U)\geq H_{top}(\phi,V)$.
\end{center}
On the other hand, the map $H_{alg}(\phi,-)$ is monotone, in other words
\begin{center}
if $U,V\in\mathcal C(G)$ and $U\subseteq V$, then $H_{alg}(\phi,U)\leq H_{alg}(\phi,V)$.
\end{center}
From these properties it follows that, in order to compute $H_{top}(\phi,-)$ and $H_{alg}(\phi,-)$, it suffices to consider respectively a local base at $0$ contained in $\mathcal C(G)$ and a cofinal subfamily of $\mathcal C(G)$. This proves the following:

\begin{lemma}\label{base}
Let $G$ be a locally compact abelian group and $\phi:G\to G$ a continuous endomorphism.
\begin{itemize}
\item[(a)] If $\mathcal B\subseteq \mathcal C(G)$ is a local base at $0$, then $h_{top}(\phi)=\sup\{H_{top}(\phi,U\}:U\in\mathcal B\}$.
\item[(b)] If $\mathcal B\subseteq \mathcal C(G)$ is cofinal, then $h_{alg}(\phi)=\sup\{H_{alg}(\phi,U):U\in\mathcal B\}$.
\end{itemize}
\end{lemma}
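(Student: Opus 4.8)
The plan is to read off both equalities directly from the two (anti)monotonicity properties recorded just above the statement, by a standard cofinality argument. In each case one inequality is immediate: since $\mathcal B\subseteq\mathcal C(G)$, the supremum taken over $\mathcal B$ cannot exceed the supremum taken over all of $\mathcal C(G)$, so $\sup\{H_{top}(\phi,U):U\in\mathcal B\}\le h_{top}(\phi)$ and likewise $\sup\{H_{alg}(\phi,U):U\in\mathcal B\}\le h_{alg}(\phi)$. The whole content lies in the reverse inequalities.

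For part (a), I would fix an arbitrary $V\in\mathcal C(G)$; since $V$ is in particular a neighborhood of $0$ and $\mathcal B$ is a local base at $0$, there is some $U\in\mathcal B$ with $U\subseteq V$. Antimonotonicity of $H_{top}(\phi,-)$ then gives $H_{top}(\phi,V)\le H_{top}(\phi,U)\le\sup\{H_{top}(\phi,W):W\in\mathcal B\}$. Taking the supremum over all $V\in\mathcal C(G)$ yields $h_{top}(\phi)\le\sup\{H_{top}(\phi,W):W\in\mathcal B\}$, which, combined with the trivial inequality above, proves (a).

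For part (b), the argument is dual. I would fix $V\in\mathcal C(G)$; since $\mathcal B$ is cofinal in $\mathcal C(G)$, there is $U\in\mathcal B$ with $V\subseteq U$, and monotonicity of $H_{alg}(\phi,-)$ gives $H_{alg}(\phi,V)\le H_{alg}(\phi,U)\le\sup\{H_{alg}(\phi,W):W\in\mathcal B\}$. Passing to the supremum over $V\in\mathcal C(G)$ gives $h_{alg}(\phi)\le\sup\{H_{alg}(\phi,W):W\in\mathcal B\}$, and again the reverse inequality is automatic.

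There is essentially no serious obstacle here: the proof is purely formal once the monotonicity facts are in hand. The only point that needs a moment's care is matching the direction of monotonicity to the correct notion of smallness in the index set: a local base provides members that shrink below any prescribed neighborhood (suited to the antimonotone $H_{top}$), whereas a cofinal family provides members that grow above any prescribed compact neighborhood (suited to the monotone $H_{alg}$). Getting these two pairings right is the whole point of separating the hypotheses in (a) and (b).
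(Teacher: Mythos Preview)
Your argument is correct and is exactly the one the paper has in mind: the paper simply records the antimonotonicity of $H_{top}(\phi,-)$ and the monotonicity of $H_{alg}(\phi,-)$ and then states that the lemma follows, without spelling out the details. Your write-up just makes explicit the cofinality/local-base argument implicit in that remark.
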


When the locally compact abelian group $G$ is totally disconnected, then $\mathcal B(G)$ is a local base at $0$ by van Dantzig Theorem from \cite{vD}. This fact applies also in Proposition \ref{cofinal} to prove that $\mathcal B(G)$ is cofinal in $\mathcal C(G)$ when the locally compact abelian group $G$ is compactly covered. Note that in this case the connected component $c(G)$ of $G$ is compact, since it is a connected locally compact abelian group which is also compactly covered, so it admits no isomorphic copy of $\R^n$ for positive integers $n$.

\begin{proposition}\label{cofinal}
If $G$ is a compactly covered locally compact abelian group, then $\mathcal B(G)$ is cofinal in $\mathcal C(G)$.
\end{proposition}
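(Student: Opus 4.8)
The goal is to show that $\mathcal B(G)$ is cofinal in $\mathcal C(G)$, that is, that every compact neighborhood $U$ of $0$ (indeed, every compact subset) is contained in some compact open subgroup of $G$. The plan is to treat the totally disconnected case first, and then reduce the general case to it by factoring out the connected component $c(G)$, which is compact by the remark preceding the statement.

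For the totally disconnected case, suppose $G$ is totally disconnected and compactly covered. By van Dantzig's theorem $\mathcal B(G)$ is nonempty, so fix a compact open subgroup $K$. Given a compact subset $U$, the cosets of $K$ form an open cover of $U$, so finitely many of them, say $x_1+K,\dots,x_m+K$, already cover $U$. Using that $G$ is compactly covered, choose for each $i$ a compact subgroup $L_i$ with $x_i\in L_i$, and set $B=K+L_1+\dots+L_m$. Then $B$ is a subgroup; it is the continuous image of the compact set $K\times L_1\times\dots\times L_m$ under addition, hence compact; it contains the open subgroup $K$, hence is itself open; and it contains every $x_i$ together with $K$, hence contains $U$. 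Thus $B\in\mathcal B(G)$ and $U\subseteq B$.

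For the general case, let $\pi\colon G\to G/c(G)$ be the quotient map. The quotient $G/c(G)$ is totally disconnected, and it is again compactly covered, since a quotient of a compactly covered group is compactly covered (the image of a compact subgroup containing a representative of $g+c(G)$ is a compact subgroup containing $g+c(G)$). Given $U\in\mathcal C(G)$, its image $\pi(U)$ is compact, so by the previous step there is a compact open subgroup $W$ of $G/c(G)$ with $\pi(U)\subseteq W$. Then $B:=\pi^{-1}(W)$ is an open subgroup of $G$ containing $U$, and since $c(G)$ is compact the quotient map $\pi$ is proper, so $B=\pi^{-1}(W)$ is compact as the preimage of the compact set $W$. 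Hence $B\in\mathcal B(G)$ contains $U$, as required.

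I expect the crux to lie in the totally disconnected case, and specifically in the observation that a finite sum of compact subgroups is compact, which is what allows one to enlarge $K$ to absorb the finitely many coset representatives while keeping the enclosing set a compact subgroup. The compact coverage hypothesis enters precisely here, supplying the compact subgroups $L_i$; without it the representatives $x_i$ need not lie in any compact subgroup and the construction breaks down, as indeed it must, since the conclusion fails for groups such as $\R$. The remaining ingredient, that the preimage under $\pi$ of a compact subgroup is compact, is a standard consequence of $c(G)$ being compact, and is what guarantees that the pullback lands back in $\mathcal C(G)$.
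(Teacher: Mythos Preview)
Your proof is correct and follows the same two-step architecture as the paper: handle the totally disconnected case first, then reduce the general case to it by passing to $G/c(G)$, using that $c(G)$ is compact so that preimages of compact open subgroups remain compact.

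The only difference is in how the totally disconnected step is finished. The paper takes a compact open subgroup $U$, passes to the discrete quotient $G/U$, observes that a discrete compactly covered abelian group is torsion, and concludes that the subgroup generated by the finite image of the given compact set is finite; its preimage is then the desired compact open subgroup. You instead stay in $G$: you cover $U$ by finitely many cosets $x_i+K$, invoke compact coverage to place each $x_i$ in a compact subgroup $L_i$, and take $B=K+L_1+\cdots+L_m$. Both arguments exploit the same finiteness coming from compactness of $U$ modulo a compact open subgroup; yours trades the ``torsion quotient'' observation for the equally elementary fact that a finite sum of compact subgroups is compact. Neither approach is materially simpler than the other.
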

\begin{proof}
Assume first that $G$ is totally disconnected and let $K\in\mathcal C(G)$. Since $\mathcal B(G)$ is a local base at $0$ in $G$, there exists $U\in\mathcal B(G)$ such that $U\subseteq K$. Let $\pi:G\to G/U$ be the canonical projection. Since $G/U$ is discrete and $\pi(K)$ is compact, it follows that $\pi(K)$ is finite. Then $F=\langle\pi(K)\rangle$ is a finite subgroup of $G/U$, as the latter group is torsion being compactly covered and discrete. Therefore, $\pi^{-1}(F)\in\mathcal B(G)$ and $\pi^{-1}(F)\supseteq K$. 

Consider now the general case. Then $c(G)$ is compact as noted above, and let $q:G\to G/c(G)$ be the canonical projection. Let $U\in\mathcal C(G)$.
Then replacing $U$ by $U+c(G)\in\mathcal C(G)$, we can assume without loss of generality that $U\supseteq c(G)$. Therefore, $q(U)=q^{-1}(q(U))$ and $q(U)\in\mathcal C(G/c(G))$. By the first part of the proof, there exists $V\in\mathcal B(G/c(G))$ such that $q(U)\subseteq V$. Now $U\subseteq q^{-1}(V)$ and $q^{-1}(V)\in\mathcal B(G)$ since $c(G)$ is compact.
\end{proof}

Lemma \ref{base}, van Dantzig Theorem and Proposition \ref{cofinal} give the following simplified formulas for the computation of the topological and the algebraic entropy.

\begin{theorem}\label{last:-)}
Let $G$ be a locally compact abelian group and $\phi:G\to G$ a continuous endomorphism.
\begin{itemize}
\item[(a)] If $G$ is totally disconnected, then $h_{top}(\phi)=\sup\{H_{top}(\phi,U):U\in\mathcal B(G)\}$.
\item[(b)] If $G$ is compactly covered, then $h_{alg}(\phi)=\sup\{H_{alg}(\phi,U):U\in\mathcal B(G)\}$.
\end{itemize}
\end{theorem}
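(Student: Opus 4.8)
The plan is to recognize that this theorem is a direct corollary obtained by feeding the appropriate topological input into each half of Lemma \ref{base}, and that no new estimation on entropies is required. For both parts I first note that $\mathcal B(G)\subseteq\mathcal C(G)$ by definition, since every compact open subgroup is in particular a compact neighborhood of $0$. Thus $\mathcal B(G)$ is always an admissible family to which the two clauses of Lemma \ref{base} can be applied, provided it has the required position inside $\mathcal C(G)$ (local base for (a), cofinal for (b)).

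For part (a), I would invoke van Dantzig's Theorem: when $G$ is totally disconnected and locally compact, the compact open subgroups, i.e. the members of $\mathcal B(G)$, form a local base at $0$. Hence $\mathcal B(G)$ is a local base at $0$ contained in $\mathcal C(G)$, and Lemma \ref{base}(a) applies verbatim with $\mathcal B=\mathcal B(G)$, yielding $h_{top}(\phi)=\sup\{H_{top}(\phi,U):U\in\mathcal B(G)\}$.

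For part (b), the corresponding input is Proposition \ref{cofinal}: when $G$ is compactly covered, $\mathcal B(G)$ is cofinal in $\mathcal C(G)$. Since the map $H_{alg}(\phi,-)$ is monotone, computing its supremum over the cofinal subfamily $\mathcal B(G)$ gives the same value as over all of $\mathcal C(G)$; this is exactly what Lemma \ref{base}(b) records, so applying it with $\mathcal B=\mathcal B(G)$ gives $h_{alg}(\phi)=\sup\{H_{alg}(\phi,U):U\in\mathcal B(G)\}$.

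I do not expect a genuine obstacle in this argument, as the substantive work has already been carried out upstream: the antimonotonicity and monotonicity of $H_{top}(\phi,-)$ and $H_{alg}(\phi,-)$ underlying Lemma \ref{base}, and — more delicately — the cofinality claim of Proposition \ref{cofinal}, whose proof handles the reduction to the totally disconnected case via the compact connected component $c(G)$. The only point worth stating carefully is that the two hypotheses are used asymmetrically: total disconnectedness is precisely what makes $\mathcal B(G)$ a \emph{local base} (so one may shrink neighborhoods, as needed for the antimonotone $H_{top}$), whereas compact coverage is what makes $\mathcal B(G)$ \emph{cofinal} (so one may enlarge neighborhoods, as needed for the monotone $H_{alg}$). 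Once this correspondence is made explicit, both identities follow immediately.
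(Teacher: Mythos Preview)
Your proposal is correct and matches the paper's own argument exactly: the theorem is stated as an immediate consequence of Lemma \ref{base}, van Dantzig's Theorem, and Proposition \ref{cofinal}, with no additional work. Your added remark about the asymmetric roles of total disconnectedness (local base, antimonotone $H_{top}$) versus compact coverage (cofinality, monotone $H_{alg}$) is a nice clarification but not something the paper spells out.
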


\section{Background on Pontryagin duality}\label{pd-sec}

We recall now the definitions and properties concerning the Pontryagin duality needed in this paper, more can be found in \cite{DPS,HR,P}.

For a locally compact abelian group $G$, the Pontryagin dual $\widehat G$ of $G$ is the group of all continuous homomorphisms (i.e., characters) $\chi:G\to \mathbb T$, endowed with the compact-open topology. Pontryagin duality Theorem asserts that $G$ is canonically isomorphic to $\widehat{\widehat G}$.  It is worth reminding also that $G$ is finite if and only if $\widehat G$ is finite, and in this case $\widehat G\cong G$; moreover, $G$ is compact precisely when $\widehat G$ is discrete, and under this assumption $G$ is totally disconnected if and only if $\widehat G$ is torsion. Moreover, we use the fact that $\widehat{\R^n}\cong\widehat \R^n\cong \R^n$ for any non-negative integer $n$.

\smallskip
For a subgroup $A$ of $G$, the annihilator of $A$ in $\widehat G$ is $$A^\perp=\{\chi\in\widehat G:\chi(A)=0\},$$ while for a subgroup $B$ of $\widehat G$, the annihilator of $B$ in $G$ is $$B^\top=\{x\in G:\chi(x)=0\ \text{for every }\chi\in B\}.$$ If $A$ is a closed subgroup of $G$, then 
\begin{equation}\label{perptop}
(A^\perp)^\top=A;
\end{equation}
analogously, if $B$ is a closed subgroup of $\widehat G$, then $(B^\top)^\perp=B$. Moreover, assume that $A$ is a closed subgroup of $G$; it is a standard fact of Pontryagin duality that
\begin{equation}\label{iso-eq}
\widehat A\cong G/A^\perp\ \text{and}\ \widehat{G/A}\cong A^\perp;
\end{equation}
we use these topological isomorphisms several times hereinafter, as well as the following corresponding exact sequences:
$$
0\to A\to G \to G/A \to 0\ \text{and}\ 0\leftarrow G/A^\perp \leftarrow\widehat G\leftarrow A^\perp\leftarrow 0.
$$

The useful observation contained in Lemma \ref{A/B} is based on the standard properties of Pontryagin duality.

\begin{lemma}\label{A/B}
Let $G$ be a locally compact abelian group, and let $B\subseteq A$ be closed subgroups of $G$. Then $\widehat{A/B}$ is topologically isomorphic to $B^\perp/A^\perp$.
\end{lemma}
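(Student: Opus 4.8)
The plan is to compute $\widehat{A/B}$ by first realizing it as an annihilator inside $\widehat A$, and then transporting that annihilator through the canonical isomorphism $\widehat A\cong \widehat G/A^\perp$ coming from \eqref{iso-eq}. First I would observe that, since $B$ is closed in $G$ and contained in $A$, it is a closed subgroup of the locally compact abelian group $A$; hence $A/B$ is again a locally compact abelian group and the standard duality facts apply to the pair $(A,B)$. Applying the isomorphism $\widehat{G/A}\cong A^\perp$ from \eqref{iso-eq} with $A$ in place of $G$ and $B$ in place of $A$, I obtain a topological isomorphism $\widehat{A/B}\cong N$, where $N$ denotes the annihilator of $B$ computed \emph{inside} $\widehat A$, that is $N=\{\psi\in\widehat A:\psi(B)=0\}$.

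Next I would use the other isomorphism in \eqref{iso-eq}, namely $\widehat A\cong \widehat G/A^\perp$, which is induced by the restriction map $r\colon\widehat G\to\widehat A$, $r(\chi)=\chi\restriction_A$. This map is a continuous surjective homomorphism with kernel $A^\perp$, and the induced map $\overline r\colon \widehat G/A^\perp\to\widehat A$ is a topological isomorphism. The key step is then to identify $\overline r^{-1}(N)$. For $\chi\in\widehat G$ one has $r(\chi)\in N$ if and only if $\chi\restriction_A$ vanishes on $B$, and since $B\subseteq A$ this happens exactly when $\chi(B)=0$, i.e. $\chi\in B^\perp$. Hence $r^{-1}(N)=B^\perp$; note that $A^\perp\subseteq B^\perp$ because annihilators reverse inclusions, so the quotient $B^\perp/A^\perp$ is well defined, and $\overline r$ carries it onto $N$. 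Being the restriction of the topological isomorphism $\overline r$ to a closed subgroup, this yields a topological isomorphism $B^\perp/A^\perp\cong N$. Chaining the two isomorphisms gives $\widehat{A/B}\cong N\cong B^\perp/A^\perp$, as desired.

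I expect the only genuine subtlety to be the topological part: one must check that the algebraic identification $\overline r^{-1}(N)=B^\perp/A^\perp$ is a homeomorphism and not merely an abstract group isomorphism. This is guaranteed because $\overline r$ is already a topological isomorphism by \eqref{iso-eq}, so its restriction to the closed subgroup $B^\perp/A^\perp$ is automatically a topological isomorphism onto the closed subgroup $N$ of $\widehat A$. An equivalent route, which I would mention as a shortcut, is to apply the contravariant exact Pontryagin duality functor to the short exact sequence $0\to B\to A\to A/B\to 0$, obtaining $\widehat{A/B}$ as the kernel of the restriction homomorphism $\widehat A\to\widehat B$, which is precisely $N$; the same transport argument through $\overline r$ then finishes the proof.
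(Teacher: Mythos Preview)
Your argument is correct. You and the paper take mirror-image routes through \eqref{iso-eq}: the paper works ``from above'' by dualizing the closed embedding $A/B\hookrightarrow G/B$ to obtain an open surjection $\widehat{G/B}\twoheadrightarrow\widehat{A/B}$, identifies $\widehat{G/B}$ with $B^\perp$, and then reads off the kernel as $A^\perp$; you instead work ``from below'' by first identifying $\widehat{A/B}$ with the annihilator $N$ of $B$ inside $\widehat A$, and then transporting $N$ through the isomorphism $\widehat A\cong\widehat G/A^\perp$. The paper's route is marginally shorter (one surjection, one kernel computation), while yours makes the topological aspect more explicit by isolating the restriction map $r$ and observing that the restriction of a topological isomorphism to a closed subgroup remains one. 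Both approaches are standard and neither offers a real advantage over the other.
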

\begin{proof} 
Consider the embedding $A/B\to G/B$. By Pontryagin duality there exists an open continuous homomorphism $\widehat{G/B}\to \widehat{A/B}$. Since $\widehat{G/B}$ is topologically isomorphic to $B^\perp$, consider the composition $\varphi:B^\perp\to \widehat{A/B}$. Since $\ker\varphi=A^\perp$, the conclusion follows.
\end{proof}

The following lemma gives a useful correspondence between elements of $\mathcal B(G)$ and of $\mathcal B(\widehat G)$.

\begin{lemma}\label{B(G)}
Let $G$ be a locally compact abelian group. Then
\begin{itemize}
\item[(a)] $U\in\mathcal B(G)$ if and only if $U^\perp\in\mathcal B(\widehat G)$.
\item[(b)]If $U_1,\ldots,U_n\in\mathcal B(G)$, then 
$$\left(\sum_{i=1}^nU_i\right)^\perp\cong \bigcap_{i=1}^nU_i^\perp\ \text{and}\ \left(\bigcap_{i=1}^nU_i\right)^\perp\cong\sum_{i=1}^n U_i^\perp.$$
\end{itemize}
\end{lemma}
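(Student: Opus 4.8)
The plan is to deduce both parts directly from the standard duality relations in \eqref{iso-eq}, the reflexivity \eqref{perptop}, and one elementary observation: a finite sum of compact open subgroups is again compact open (it is open since it contains each summand, and compact as the continuous image of $U_1\times\cdots\times U_n$ under addition), while a finite intersection of compact open subgroups is evidently compact open as well.

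For part (a), I would first recall that a compact open subgroup is in particular closed, so the isomorphisms in \eqref{iso-eq} apply to $U$. Since $U$ is open, $G/U$ is discrete, hence $U^\perp\cong\widehat{G/U}$ is compact (the dual of a discrete group is compact). Since $U$ is compact, its dual $\widehat U\cong\widehat G/U^\perp$ is discrete, which forces $U^\perp$ to be open in $\widehat G$. Thus $U^\perp\in\mathcal B(\widehat G)$. For the converse I would invoke Pontryagin reflexivity: applying the implication just proved to the group $\widehat G$ and its compact open subgroup $U^\perp$ shows that the annihilator of $U^\perp$ lies in $\mathcal B(\widehat{\widehat G})$, and under the canonical identification $\widehat{\widehat G}=G$ this annihilator is $(U^\perp)^\top=U$ by \eqref{perptop}. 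Hence $U\in\mathcal B(G)$.

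For part (b), the first formula is a set-theoretic identity: since a character is a homomorphism, $\chi$ annihilates $\sum_i U_i$ precisely when it annihilates every summand $U_i$, so $\left(\sum_i U_i\right)^\perp=\bigcap_i U_i^\perp$, a genuine equality and hence a topological isomorphism. For the second formula I would apply the analogue of this identity inside $\widehat G$ to the subgroups $U_i^\perp$, which all lie in $\mathcal B(\widehat G)$ by part (a): this gives $\left(\sum_i U_i^\perp\right)^\top=\bigcap_i (U_i^\perp)^\top=\bigcap_i U_i$, the last step using \eqref{perptop}. Taking annihilators of both sides and using that $\sum_i U_i^\perp$ is closed (being compact open by the preliminary observation), so that $\left(\sum_i U_i^\perp\right)^{\top\perp}=\sum_i U_i^\perp$, yields $\left(\bigcap_i U_i\right)^\perp=\sum_i U_i^\perp$.

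The only point requiring real care—and the step I expect to be the main obstacle—is ensuring that every subgroup involved is closed, so that the composite annihilator operations collapse via \eqref{perptop}; concretely, that $\sum_i U_i^\perp$ is closed, which is exactly what the observation on finite sums of compact open subgroups (combined with part (a)) supplies. Everything else is a routine transcription of the duality interchanging ``open'' with ``cocompact'' and ``compact'' with ``discrete''.
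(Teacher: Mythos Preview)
Your argument is correct and, for part (a), is exactly the paper's approach: openness of $U$ gives compactness of $U^\perp$ via $U^\perp\cong\widehat{G/U}$, and compactness of $U$ gives openness of $U^\perp$ via $\widehat U\cong\widehat G/U^\perp$; the paper omits the converse direction you supply via reflexivity. For part (b) the paper simply declares it ``a well-known fact in Pontryagin duality'' without proof, so your explicit derivation---the set-theoretic equality $(\sum U_i)^\perp=\bigcap U_i^\perp$, its dual applied to the $U_i^\perp$, and the closedness of $\sum U_i^\perp$ via part (a)---is strictly more than the paper offers, and is sound.
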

\begin{proof}
(a)  Since $U$ is open in $G$, the quotient $G/U$ is discrete, therefore $U^\perp\cong \widehat{G/U}$ is compact. Moreover, $U$ is compact, so $G/U^\perp\cong\widehat U$ is discrete, and hence $U^\perp$ is open in $G$.

(c) is a well-known fact in Pontryagin duality.
\end{proof}

It is worth to recall also the following known fact from Pontryagin duality concerning a continuous endomorphism and its dual.

\begin{lemma}
Let $G$ be a locally compact abelian group, $\phi:G\to G$ a continuous endomorphism and $H$ a closed subgroup of $G$. Then:
\begin{itemize}
\item[(a)] $H$ is $\phi$-invariant if and only if $H^\perp$ is $\widehat\phi$-invariant;
\item[(b)] $H$ is fully invariant in $G$ if and only if $H^\perp$ is fully invariant in $\widehat G$.
\end{itemize}
\end{lemma}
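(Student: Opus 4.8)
The plan is to derive both parts from a single annihilator identity relating $\phi$ and $\widehat\phi$, and then to transfer the result to fully invariant subgroups by exploiting that dualization is a bijection on endomorphism monoids. First I would establish, for the closed subgroup $H$, the identity
$$\phi(H)^\perp=\widehat\phi^{-1}(H^\perp).$$
This is a direct unwinding of the definitions: a character $\chi\in\widehat G$ annihilates $\phi(H)$ precisely when $\chi\circ\phi=\widehat\phi(\chi)$ annihilates $H$, i.e. when $\widehat\phi(\chi)\in H^\perp$, which is exactly $\chi\in\widehat\phi^{-1}(H^\perp)$. Note that $\phi(H)$ need not be closed, but this causes no trouble, since $\phi(H)^\perp=\overline{\phi(H)}^\perp$ as characters are continuous; thus the annihilator on the left really sees the closed subgroup $\overline{\phi(H)}$.

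For part (a), I would combine this identity with the order-reversing bijection $A\mapsto A^\perp$ between the closed subgroups of $G$ and those of $\widehat G$, whose inverse is $B\mapsto B^\top$ by \eqref{perptop} (and its analogue $(B^\top)^\perp=B$). Since $H$ is closed, the $\phi$-invariance $\phi(H)\subseteq H$ is equivalent to $\overline{\phi(H)}\subseteq H$, and applying $\perp$ this is equivalent to $H^\perp\subseteq\overline{\phi(H)}^\perp=\phi(H)^\perp=\widehat\phi^{-1}(H^\perp)$, which in turn says exactly that $\widehat\phi(H^\perp)\subseteq H^\perp$, i.e. that $H^\perp$ is $\widehat\phi$-invariant. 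Every implication here is reversible, so the stated equivalence follows.

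For part (b), I would apply part (a) to each continuous endomorphism in turn. The Pontryagin functor is a contravariant equivalence of $\mathcal L$ with itself (a duality), so $\psi\mapsto\widehat\psi$ is a bijection of the monoid $\End(G)$ of continuous endomorphisms of $G$ onto $\End(\widehat G)$ (indeed a monoid anti-isomorphism, with $\widehat{\widehat\psi}$ corresponding to $\psi$ under $\widehat{\widehat G}\cong G$); in particular, as $\psi$ ranges over all continuous endomorphisms of $G$, the dual $\widehat\psi$ ranges over all continuous endomorphisms of $\widehat G$. Hence $H$ is fully invariant in $G$ iff $\psi(H)\subseteq H$ for all $\psi\in\End(G)$, iff (by part (a) applied to each such $\psi$) $\widehat\psi(H^\perp)\subseteq H^\perp$ for all $\psi\in\End(G)$, iff $\rho(H^\perp)\subseteq H^\perp$ for every $\rho\in\End(\widehat G)$, which is precisely full invariance of $H^\perp$ in $\widehat G$.

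The computations are essentially routine; the only two points needing a little care are the possible non-closedness of $\phi(H)$ (handled by passing to its closure, so that the annihilator identity and \eqref{perptop} apply) and the surjectivity of $\psi\mapsto\widehat\psi$ onto $\End(\widehat G)$, which is exactly what makes the quantifier over endomorphisms match up in part (b) rather than yielding only one implication.
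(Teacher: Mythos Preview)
The paper does not actually prove this lemma: it is introduced with the phrase ``It is worth to recall also the following known fact from Pontryagin duality'' and is stated without proof. So there is no argument in the paper to compare against; your task is simply to supply a correct justification, and you have done so.

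Your proof is correct. The annihilator identity $\phi(H)^\perp=\widehat\phi^{-1}(H^\perp)$ is exactly the right tool, and your handling of the possible non-closedness of $\phi(H)$ via $\phi(H)^\perp=\overline{\phi(H)}^\perp$ is the appropriate fix so that the order-reversing bijection of \eqref{perptop} applies cleanly. Part~(b) then follows as you say, once one knows that $\psi\mapsto\widehat\psi$ is a bijection $\End(G)\to\End(\widehat G)$; this is indeed part of Pontryagin duality being a categorical equivalence, and it is precisely what is needed to match up the universal quantifiers. It is worth remarking that your key identity is the mirror image of the one the paper proves later in Proposition~\ref{perp}, namely $(\phi^{-1}(U))^\perp=\widehat\phi(U^\perp)$, established there by the same kind of elementwise verification; so your approach is entirely in the spirit of the paper's own computations.
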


Furthermore, we have the following pair of corresponding commutative diagrams, where we let $\psi=\widehat\phi$, and $\overline\phi:G/H\to G/H$ and $\overline\psi:\widehat G/H^\perp\to \widehat G/H^\perp$ are the continuous endomorphisms induced respectively by $\phi$ and $\psi$:
\begin{equation*}\label{diagram}
\xymatrix{
H\ar[d]_{\phi\!\restriction_H}\ar@{^{(}->}[r] & G \ar[d]^\phi\ar@{->>}[r] & G/H\ar[d]^{\overline\phi} & & \widehat G/H^\perp & \widehat G\ar@{->>}[l] & H^\perp \ar@{_{(}->}[l] \\
H\ar@{^{(}->}[r] & G\ar@{->>}[r] & G/H & & \widehat G/H^\perp\ar[u]^{\overline\psi} & \widehat G \ar[u]^\psi\ar@{->>}[l] & H^\perp\ar[u]_{\psi\!\restriction_{H^\perp}} \ar@{_{(}->}[l]
}
\end{equation*}
The second diagram is obtained from the first one by applying the Pontryagin duality functor, where we always take into account the isomorphisms in \eqref{iso-eq}. In particular, 
\begin{equation}\label{3.3}
\begin{split}
\widehat{\overline\psi}\ \text{is conjugated to}\ \phi\!\restriction_H\ \text{and}\ \widehat{\psi\!\restriction_{H^\perp}}\ \text{is conjugated to}\ \overline \phi;\\
\overline\psi\ \text{is conjugated to}\ \widehat{\phi\!\restriction_H}\ \text{and}\ \psi\!\restriction_{H^\perp}\ \text{is conjugated to}\ \widehat{\overline \phi}.
\end{split}
\end{equation}
We say that a continuous endomorphism $\eta:G\to G$ is conjugated to another continuous endomorphism $\gamma:H\to H$ if there exists a topological isomorphism $\xi:G\to H$ such that $\gamma=\xi\eta\xi^{-1}$.

\section{Proof of the Bridge Theorem}\label{bt-sec}

Using the properties of Pontryagin duality recalled in the previous section, we start now with steps taking to the proof of Theorem \ref{BT}.

\begin{proposition}\label{perp}
Let $G$ be a locally compact abelian group, $\phi:G\to G$ a continuous endomorphism and $U\in\mathcal B(G)$.
For every integer $n\geq0$, $$(\phi^{-n}(U))^\perp=(\widehat\phi)^n (U^\perp).$$
\end{proposition}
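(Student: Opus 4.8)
The plan is to prove the identity by induction on $n$, reducing everything to a single-step identity for open subgroups that in turn follows from the duality between kernels of homomorphisms and images of their duals.

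The case $n=0$ is trivial, since $\phi^0$ and $(\widehat\phi)^0$ are the identity and both sides reduce to $U^\perp$. For the inductive step it is convenient to isolate the following claim, valid for \emph{every} open subgroup $A$ of $G$ (not necessarily compact):
\[
(\phi^{-1}(A))^\perp = \widehat\phi(A^\perp).
\]
Granting this, the proposition follows at once: each $\phi^{-k}(U)$ is open, being the preimage of the open subgroup $U$ under the continuous map $\phi^k$; writing $\phi^{-n}(U)=\phi^{-1}(\phi^{-(n-1)}(U))$ and applying the displayed identity with $A=\phi^{-(n-1)}(U)$ gives $(\phi^{-n}(U))^\perp=\widehat\phi\big((\phi^{-(n-1)}(U))^\perp\big)$, whence the inductive hypothesis $(\phi^{-(n-1)}(U))^\perp=(\widehat\phi)^{n-1}(U^\perp)$ yields $(\phi^{-n}(U))^\perp=(\widehat\phi)^n(U^\perp)$.

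To prove the single-step identity I would let $\pi\colon G\to G/A$ be the canonical projection and set $g=\pi\circ\phi\colon G\to G/A$, so that $\ker g=\phi^{-1}(A)$. Dualizing the factorization $g=\pi\circ\phi$ gives $\widehat g=\widehat\phi\circ\widehat\pi$, and under the identification $\widehat{G/A}\cong A^\perp$ from \eqref{iso-eq} the map $\widehat\pi$ is precisely the inclusion $A^\perp\hookrightarrow\widehat G$; hence $\Im\widehat g=\widehat\phi(A^\perp)$. Now I invoke the standard duality identity $(\ker g)^\perp=\overline{\Im\widehat g}$, which gives $(\phi^{-1}(A))^\perp=\overline{\widehat\phi(A^\perp)}$. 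Since $A$ is open, $G/A$ is discrete and so $A^\perp\cong\widehat{G/A}$ is compact, exactly as in the proof of Lemma \ref{B(G)}; therefore $\widehat\phi(A^\perp)$, being the continuous image of a compact set, is itself compact and closed, so the closure may be dropped and the identity follows.

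The only non-formal ingredient is the reverse inclusion $(\ker g)^\perp\subseteq\overline{\Im\widehat g}$ in the duality identity above (the inclusion $\Im\widehat g\subseteq(\ker g)^\perp$ is immediate, since $\lambda\circ g$ annihilates $\ker g$ for every character $\lambda$ of $G/A$). This is the expected main obstacle, as it encodes the exactness of Pontryagin duality and the subtlety that $\phi$ need not be a topological embedding modulo its kernel; I would cite it as a standard fact from \cite{DPS,HR,P} rather than reprove it. Note that the argument never uses compactness of $A$, only its openness, which is what legitimizes the induction even though the intermediate subgroups $\phi^{-k}(U)$ need not lie in $\mathcal B(G)$.
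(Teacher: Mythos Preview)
Your proof is correct, but it takes a different route from the paper's. The paper handles $n=1$ by a completely elementary chain of equivalences: setting $V=U^\perp$, one has $x\in\phi^{-1}(V^\top)\iff\chi(\phi(x))=0$ for all $\chi\in V\iff\widehat\phi(\chi)(x)=0$ for all $\chi\in V\iff x\in(\widehat\phi(V))^\top$, which gives $\phi^{-1}(V^\top)=(\widehat\phi(V))^\top$ directly from the definitions; then one takes annihilators and uses only that $\widehat\phi(V)$ is closed (as $V$ is compact) together with the basic identity $(B^\top)^\perp=B$ already recorded alongside \eqref{perptop}. For general $n$ the paper does not induct: it simply applies the $n=1$ case to $\phi^n$, invoking $(\widehat\phi)^n=\widehat{\phi^n}$. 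Thus the paper never needs the kernel/image duality $(\ker g)^\perp=\overline{\Im\widehat g}$ that you flag as the ``non-formal ingredient'' and import as a black box, and it also avoids having to prove the single-step identity for arbitrary open subgroups (which your inductive scheme forces, since the intermediate $\phi^{-k}(U)$ need not lie in $\mathcal B(G)$). Your argument is more conceptual in that it recognizes the statement as an instance of a general duality principle, but the paper's is shorter and entirely self-contained within the elementary facts collected in Section~\ref{pd-sec}.
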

\begin{proof}
We prove the result for $n=1$, that is, 
\begin{equation}\label{1eq}
(\phi^{-1}(U))^\perp=\widehat\phi (U^\perp). 
\end{equation}
The proof for $n>1$ follows easily from this case noting that $(\widehat\phi)^n=\widehat{(\phi^n)}$. 

Let $V=U^\perp$; then $V\in\mathcal B(\widehat G)$ by Lemma \ref{B(G)}(a) and $U=V^\top$ by \eqref{perptop}. We prove that 
\begin{equation}\label{2eq}
\phi^{-1}(V^\top)=(\widehat\phi(V))^\top,
\end{equation} 
that is equivalent to \eqref{1eq} by \eqref{perptop}. So let $x\in \phi^{-1}(V^\top)$; equivalently, $\phi(x)\in V^\top$, that is $\chi(\phi(x))=0$ for every $\chi\in V$. This occurs precisely when $\widehat\phi(\chi)(x)=0$ for every $\chi\in V$, if and only if $x\in (\widehat\phi(V))^\top$. This chain of equivalences proves \eqref{2eq}.
%
%
\end{proof}

Lemma \ref{B(G)}(b) and Proposition \ref{perp} have the following immediate consequence on the relation between cotrajectories in $G$ and trajectories in $\widehat G$.

\begin{corollary}\label{perp-c}
Let $G$ be a locally compact abelian group, $\phi:G\to G$ a continuous endomorphism and $U\in\mathcal B(G)$.
Then, for every integer $n>0$,  $$C_n(\phi,U)^\perp=T_n(\widehat\phi,U^\perp).$$
\end{corollary}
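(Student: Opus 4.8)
The plan is to dualize the defining intersection of the cotrajectory term by term, turning it into the defining sum of the trajectory. Writing out the definition, $C_n(\phi,U)=\bigcap_{i=0}^{n-1}\phi^{-i}(U)$, so that
\[
C_n(\phi,U)^\perp=\left(\bigcap_{i=0}^{n-1}\phi^{-i}(U)\right)^\perp .
\]
First I would pass from the annihilator of the intersection to the sum of the annihilators, obtaining $\sum_{i=0}^{n-1}(\phi^{-i}(U))^\perp$. Then I would rewrite each summand via Proposition \ref{perp}, which gives $(\phi^{-i}(U))^\perp=(\widehat\phi)^i(U^\perp)$. Collecting the terms yields $\sum_{i=0}^{n-1}(\widehat\phi)^i(U^\perp)=T_n(\widehat\phi,U^\perp)$, which is exactly the claim.

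The one point that needs care is the first step. Lemma \ref{B(G)}(b) is stated for members of $\mathcal B(G)$, i.e.\ compact open subgroups, whereas the subgroups $\phi^{-i}(U)$ appearing in the intersection are open but need not be compact (the preimage of a compact set under $\phi$ need not be compact). The general Pontryagin-duality identity for closed subgroups reads $\left(\bigcap_i A_i\right)^\perp=\overline{\sum_i A_i^\perp}$, with a closure on the right. So I would first invoke this general form, and then argue that the closure is superfluous here: by Proposition \ref{perp} each $(\phi^{-i}(U))^\perp=(\widehat\phi)^i(U^\perp)$ is the continuous image of the group $U^\perp$, which is compact by Lemma \ref{B(G)}(a); hence each summand is a compact subgroup, and a finite sum of compact subgroups is again compact, therefore closed. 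Thus $\sum_{i=0}^{n-1}(\widehat\phi)^i(U^\perp)$ is already closed and equals its closure, so no information is lost.

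The main obstacle is precisely this non-compactness of the terms $\phi^{-i}(U)$: the bookkeeping of the two equalities (annihilator of an intersection, and Proposition \ref{perp}) is routine, but one must not apply Lemma \ref{B(G)}(b) verbatim to non-compact subgroups. Once the closure is seen to be redundant via the compactness of the dual summands $(\widehat\phi)^i(U^\perp)$, the identity $C_n(\phi,U)^\perp=T_n(\widehat\phi,U^\perp)$ follows immediately for every integer $n>0$.
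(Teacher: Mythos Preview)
Your proof is correct and follows exactly the paper's approach: the paper simply states that the result is an immediate consequence of Lemma~\ref{B(G)}(b) and Proposition~\ref{perp}. Your additional care in noting that the subgroups $\phi^{-i}(U)$ need not be compact (so Lemma~\ref{B(G)}(b) does not literally apply) and in resolving this via the compactness of the dual summands $(\widehat\phi)^i(U^\perp)$ is a welcome refinement of a point the paper glosses over.
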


So now we can see the relation between indexes, with respect to the formulas in Fact \ref{no-mu}.

\begin{proposition}\label{C_n-T_n}
Let $G$ be a locally compact abelian group, $\phi:G\to G$ a continuous endomorphism and $U\in\mathcal B(G)$. Then, for every integer $n>0$,
$$[U:C_n(\phi,U)]=[T_n(\widehat\phi,U^\perp):U^\perp].$$
\end{proposition}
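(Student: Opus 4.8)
The goal is to show $[U:C_n(\phi,U)]=[T_n(\widehat\phi,U^\perp):U^\perp]$. My plan is to convert the index on the left, which is computed in $G$, into an index of annihilators in $\widehat G$, and then identify that with the index on the right. The key structural input is already available: by Corollary \ref{perp-c} we have $C_n(\phi,U)^\perp=T_n(\widehat\phi,U^\perp)$, so the two subgroups appearing on the right-hand side are exactly the annihilators of the two subgroups appearing on the left-hand side, in reversed order (since annihilation is inclusion-reversing). Thus the whole problem reduces to a single duality principle: for compact open subgroups $B\subseteq A$ of $G$, the index $[A:B]$ equals the index $[B^\perp:A^\perp]$ of their annihilators in $\widehat G$.

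First I would verify that the four subgroups in play are of the right type so that all indices are finite and the duality applies. By Lemma \ref{B(G)}(a), $U\in\mathcal B(G)$ gives $U^\perp\in\mathcal B(\widehat G)$. Since $C_n(\phi,U)$ is a finite intersection of preimages of $U$ under powers of $\phi$, it again lies in $\mathcal B(G)$, and correspondingly $T_n(\widehat\phi,U^\perp)$, being a finite sum of images of $U^\perp$, lies in $\mathcal B(\widehat G)$; in particular $C_n(\phi,U)\subseteq U$ and $U^\perp\subseteq T_n(\widehat\phi,U^\perp)$, so both indices are finite.

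The central step is the index-reversal identity $[U:C_n(\phi,U)]=[C_n(\phi,U)^\perp:U^\perp]$. I would obtain this from Lemma \ref{A/B}: with $B=C_n(\phi,U)$ and $A=U$, that lemma gives a topological isomorphism $\widehat{U/C_n(\phi,U)}\cong C_n(\phi,U)^\perp/U^\perp$. Since $U/C_n(\phi,U)$ is a finite discrete group, its Pontryagin dual is finite of the same order, so $[U:C_n(\phi,U)]=|U/C_n(\phi,U)|=|C_n(\phi,U)^\perp/U^\perp|=[C_n(\phi,U)^\perp:U^\perp]$. Finally, substituting $C_n(\phi,U)^\perp=T_n(\widehat\phi,U^\perp)$ from Corollary \ref{perp-c} turns the right-hand side into $[T_n(\widehat\phi,U^\perp):U^\perp]$, which is exactly the claimed equality.

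I do not anticipate a genuine obstacle here, since every ingredient has been set up earlier in the excerpt; the only point requiring mild care is the bookkeeping of the direction of inclusions, namely that the annihilator functor reverses inclusion so that $U^\perp$ sits \emph{inside} $C_n(\phi,U)^\perp$, matching the orientation of the index on the right-hand side. The one fact I am silently using is that for a finite abelian group $F$ one has $|\widehat F|=|F|$, which is the elementary finite case of Pontryagin duality recalled at the start of Section \ref{pd-sec}.
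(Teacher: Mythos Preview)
Your proposal is correct and follows essentially the same route as the paper's own proof: invoke Corollary~\ref{perp-c} to identify $C_n(\phi,U)^\perp$ with $T_n(\widehat\phi,U^\perp)$, then apply Lemma~\ref{A/B} to obtain $\widehat{U/C_n(\phi,U)}\cong T_n(\widehat\phi,U^\perp)/U^\perp$, and conclude by finiteness of these quotients. The only difference is cosmetic---you spell out a bit more explicitly why the relevant subgroups lie in $\mathcal B(G)$ and $\mathcal B(\widehat G)$ and why the quotient is finite, which the paper leaves implicit.
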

\begin{proof} 
Let $U\in\mathcal B(G)$. Then $U^\perp\in\mathcal B(\widehat G)$ by Lemma \ref{B(G)}(a). By Corollary \ref{perp-c}, $C_n(\phi,U)^\perp=T_n(\widehat \phi,U^\perp)$ for every integer $n>0$.
Consider the following sequences:
$$0\to C_n(\phi,U)\to U\to G$$
and
$$\widehat G=0^\perp\leftarrow T_n(\widehat\phi,U^\perp)=C_n(\phi,U)^\perp\leftarrow U^\perp\leftarrow 0=G^\perp.$$
By Lemma \ref{A/B}, $\widehat{U/C_n(\phi,U)}\cong T_n(\widehat\phi,U^\perp)/U^\perp$. Since these quotients are finite, we have the isomorphism $U/C_n(\phi,U)\cong T_n(\widehat\phi,U^\perp)/U^\perp$, and this concludes the proof.
\end{proof}

The following result gives ``locally'' the exact relation between the topological entropy of a continuous endomorphism $\phi$ and the algebraic entropy of its dual $\widehat\phi$.

\begin{proposition}\label{C_n-T_n*}
Let $G$ be a locally compact abelian group, let $\phi:G\to G$ be a continuous endomorphism and $U\in\mathcal B(G)$. Then
$$H_{top}(\phi,U)=H_{alg}(\widehat\phi,U^\perp).$$
\end{proposition}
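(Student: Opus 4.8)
The plan is to combine the two formulas from Fact~\ref{no-mu} with the index identity already established in Proposition~\ref{C_n-T_n}. Since $U\in\mathcal B(G)$, the first formula in Fact~\ref{no-mu} gives
\begin{equation*}
H_{top}(\phi,U)=\lim_{n\to\infty}\frac{\log[U:C_n(\phi,U)]}{n}.
\end{equation*}
By Lemma~\ref{B(G)}(a), $U^\perp\in\mathcal B(\widehat G)$, so the second formula in Fact~\ref{no-mu} applies to $\widehat\phi$ and $U^\perp$, yielding
\begin{equation*}
H_{alg}(\widehat\phi,U^\perp)=\lim_{n\to\infty}\frac{\log[T_n(\widehat\phi,U^\perp):U^\perp]}{n}.
\end{equation*}
Both limits exist (not merely as $\limsup$) precisely because we are working with compact open subgroups, which is exactly the hypothesis we have.

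The bridge between the two is Proposition~\ref{C_n-T_n}, which states $[U:C_n(\phi,U)]=[T_n(\widehat\phi,U^\perp):U^\perp]$ for every integer $n>0$. Substituting this equality of indices termwise into the two limit expressions above shows that the sequences whose limits define $H_{top}(\phi,U)$ and $H_{alg}(\widehat\phi,U^\perp)$ are in fact identical for every $n$. Hence their limits coincide, giving $H_{top}(\phi,U)=H_{alg}(\widehat\phi,U^\perp)$.

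There is essentially no obstacle here: all the substantive work has been front-loaded into the earlier results. The only point requiring a moment of care is confirming that Fact~\ref{no-mu} is legitimately applicable to the pair $(\widehat\phi, U^\perp)$ on the group $\widehat G$; this is secured by Lemma~\ref{B(G)}(a), which guarantees $U^\perp$ lies in $\mathcal B(\widehat G)$, so that $\widehat G$, $\widehat\phi$, and $U^\perp$ satisfy the hypotheses of Fact~\ref{no-mu}. Once that membership is noted, the proof is a direct term-by-term comparison of the two limits via Proposition~\ref{C_n-T_n}, and nothing further is needed.
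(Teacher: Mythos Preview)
Your proof is correct and follows exactly the same approach as the paper, which simply says the conclusion follows from Proposition~\ref{C_n-T_n} by applying Fact~\ref{no-mu}. You have merely made explicit the verification (via Lemma~\ref{B(G)}(a)) that $U^\perp\in\mathcal B(\widehat G)$ so that Fact~\ref{no-mu} applies on the dual side, which is a welcome clarification but not a departure from the paper's argument.
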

\begin{proof}
The conclusion follows from Proposition \ref{C_n-T_n}, applying Fact \ref{no-mu}.
\end{proof}

We are now in position to prove our main theorem, that is Theorem \ref{BT}:

\begin{theorem}
Let $G$ be a totally disconnected locally compact abelian group and let $\phi:G\to G$ be a continuous endomorphism. Then $h_{top}(\phi)=h_{alg}(\widehat\phi)$.
\end{theorem}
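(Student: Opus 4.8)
The plan is to reduce the computation of both entropies to suprema taken over the family $\mathcal B(\cdot)$ of compact open subgroups, and then to match these two suprema term by term using the local identity already established in Proposition \ref{C_n-T_n*}. The whole argument is essentially an assembly of the preparatory results in Sections \ref{entropy-sec}--\ref{bt-sec}, so the main work is to verify that the hypotheses of each cited result are met and that the correspondence $U \mapsto U^\perp$ sets up a bijection between the two indexing families.

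First I would invoke Theorem \ref{last:-)}(a): since $G$ is totally disconnected and locally compact, van Dantzig's Theorem guarantees that $\mathcal B(G)$ is a local base at $0$, and hence
\begin{equation*}
h_{top}(\phi)=\sup\{H_{top}(\phi,U):U\in\mathcal B(G)\}.
\end{equation*}
Next I would handle the dual side. Here the key structural observation is that $\widehat G$ is compactly covered: since $G$ is totally disconnected LCA, it lies in $\mathcal Ltd$, and by the duality identities recalled in the introduction $\widehat{\mathcal Ltd}=\mathcal Lcc$, so $\widehat G$ is compactly covered. This places $\widehat G$ in the scope of Theorem \ref{last:-)}(b), which yields
\begin{equation*}
h_{alg}(\widehat\phi)=\sup\{H_{alg}(\widehat\phi,V):V\in\mathcal B(\widehat G)\}.
\end{equation*}
Establishing that $\widehat G$ is compactly covered is the one genuinely conceptual step, as opposed to a routine invocation; everything else is bookkeeping.

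With both entropies expressed as suprema over $\mathcal B(G)$ and $\mathcal B(\widehat G)$ respectively, I would close the argument by matching the two families. By Lemma \ref{B(G)}(a) the map $U\mapsto U^\perp$ carries $\mathcal B(G)$ into $\mathcal B(\widehat G)$, and by Pontryagin duality applied to $\widehat G$ (whose dual is canonically $G$) it is a bijection onto $\mathcal B(\widehat G)$, with inverse $V\mapsto V^\top$. For each $U\in\mathcal B(G)$, Proposition \ref{C_n-T_n*} gives the exact local equality $H_{top}(\phi,U)=H_{alg}(\widehat\phi,U^\perp)$. Taking suprema over the two matched families therefore yields
\begin{equation*}
h_{top}(\phi)=\sup_{U\in\mathcal B(G)}H_{top}(\phi,U)=\sup_{U\in\mathcal B(G)}H_{alg}(\widehat\phi,U^\perp)=\sup_{V\in\mathcal B(\widehat G)}H_{alg}(\widehat\phi,V)=h_{alg}(\widehat\phi),
\end{equation*}
which is the desired conclusion.

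I expect the main obstacle to be not the arithmetic but the careful justification that the supremum on the dual side may legitimately be restricted to compact open subgroups. Proposition \ref{C_n-T_n*} only speaks about $V$ of the form $U^\perp$, i.e. members of $\mathcal B(\widehat G)$, so without Theorem \ref{last:-)}(b) there would be a gap: a priori $h_{alg}(\widehat\phi)$ is a supremum over all of $\mathcal C(\widehat G)$, and one must know that $\mathcal B(\widehat G)$ is cofinal in $\mathcal C(\widehat G)$ to discard the larger compact neighborhoods. That cofinality is exactly what Proposition \ref{cofinal} (and hence Theorem \ref{last:-)}(b)) delivers, and it is available precisely because $\widehat G$ is compactly covered. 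Thus the entire proof hinges on correctly identifying the duality class of $\widehat G$; once that is in place the remaining steps are direct applications of the cited propositions.
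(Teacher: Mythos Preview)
Your proof is correct and follows essentially the same route as the paper: reduce both entropies to suprema over $\mathcal B(G)$ and $\mathcal B(\widehat G)$ via Theorem \ref{last:-)}, then match the two families termwise through the bijection $U\mapsto U^\perp$ using Proposition \ref{C_n-T_n*}. The only cosmetic difference is that the paper derives the equality as a pair of inequalities rather than invoking the bijection explicitly, and leaves the fact that $\widehat G$ is compactly covered implicit in its appeal to Theorem \ref{last:-)}, whereas you spell it out.
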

\begin{proof}
Theorem \ref{last:-)} gives 
$$
h_{top}(\phi)=\sup\{H_{top}(\phi,U):U\in\mathcal B(G)\}\ \text{and}\ h_{alg}(\widehat \phi)=\sup\{H_{alg}(\widehat\phi,K):K\in\mathcal B(\widehat G)\}.
$$
By Proposition \ref{C_n-T_n*}, $H_{top}(\phi,U)=H_{alg}(\widehat\phi,U^\perp)$ for every $U\in\mathcal B(G)$, so $h_{top}(\phi)\leq h_{alg}(\widehat \phi)$.
If $K\in\mathcal B(\widehat G)$, then $K=U^\perp$, where $U=K^\bot\in\mathcal B(G)$ by Lemma \ref{B(G)}. So Proposition \ref{C_n-T_n*} applies again to give that $H_{top}(\phi,K^\top)=H_{alg}(\widehat\phi,K)$ for every $K\in\mathcal B(\widehat G)$, hence $h_{top}(\phi)\geq h_{alg}(\widehat \phi)$, and this concludes the proof.
\end{proof}

\section{Additivity implies the  Bridge Theorem}\label{at-sec}

In the next example we see that the Bridge Theorem holds for continuous endomorphisms of $\R^n$.

\begin{example}\label{Rn}
Let $n$ be a positive integer and $\phi:\R^n\to \R^n$ a continuous endomorphism. Then $h_{top}(\phi)=h_{alg}(\widehat\phi)$.

Indeed, $\phi$ is $\R$-linear so it is given by an $n\times n$ matrix $M_\phi$. The same argument applies to $\widehat\phi:\R^n\to \R^n$, moreover $M_{\widehat\phi}$ is the transposed matrix of $M_\phi$ by definition of the dual endomorphism. In particular, $\phi$ and $\widehat\phi$ have the same eigenvalues, say $\{\lambda_1,\ldots,\lambda_n\}\subseteq\mathbb C$.
Bowen proved in \cite{B} that
$$h_{top}(\phi)=\sum_{|\lambda_i|>1}\log|\lambda_i|$$ 
and Virili established in \cite{V} that
$$h_{alg}(\widehat\phi)=\sum_{|\lambda_i|>1}\log|\lambda_i|;$$ 
hence, $h_{top}(\phi)=h_{alg}(\widehat\phi)$.
\end{example}

Before giving the proof of Theorem \ref{AT->BT} we describe in detail the fully invariant subgroups that we use there. So let $G$ be a locally compact abelian group. The connected component $c(G)$ of $G$ is fully invariant. Another fully invariant subgroup of $G$ that is natural to consider in this setting is the subgroup $B(G)$ consisting of all elements of $G$ contained in some compact subgroup of $G$, equivalently, $$B(G)=\sum\{K\leq G: K\ \text{compact}\}.$$ Clearly, also $B(G)$ is fully invariant in $G$. Moreover, note that $G$ is compactly covered precisely when $G=B(G)$.

These two subgroups $c(G)$ and $B(G)$ are related by Pontryagin duality in the sense that 
$$B(G)^\perp=c(\widehat G)\ \text{and}\ c(G)^\perp=B(\widehat G).$$
Indeed, $B(G)=\sum\{K\leq G: K\ \text{compact}\}$, so $B(G)^\perp=\bigcap\{K^\perp:K\leq G\ \text{compact}\}$; since $K$ is compact, $\widehat K\cong \widehat G/K^\perp$ is discrete, hence $K^\perp$ is an open subgroup of $\widehat G$, and now it suffices to recall that $c(\widehat G)=\bigcap\{U\leq \widehat G:U\ \text{open}\}$.

Consider their intersection $c(B(G))=c(G)\cap B(G)$ and their sum $G_1=c(G)+B(G)$, which are other fully invariant subgroups of $G$. Note that the annihilators of $c(B(G))$ and $G_1$ in $\widehat G$ are respectively $c(B(G))^\perp=c(\widehat G)+B(\widehat G)$ and $G_1^\perp=c(B(\widehat G))$.

\begin{remark}
Recall that a locally compact abelian group $G$ can always be written as $G=\R^n\times G_0$, for some positive integer $n$ and a locally compact abelian group $G_0$ admitting a compact open subgroup $K$. Then $c(B(G))=c(K)$, and $c(G)=\R^n\times c(K)$.
Moreover, $G_1=\R^n\times B(G)$, while $c(B(G))^\perp=\R^n\times B(\widehat G)$ and $G_1^\perp=c(B(\widehat G))$.
\end{remark}

The situation is described by the following diagrams, each one dual with respect to the other.
$$\xymatrix@-0.5pc{
& G \ar@{-}[d] &  \\
& G_1=c(G)+B(G) \ar@{-}[dl]\ar@{-}[dr] & \\
c(G)\ar@{-}[dr] & & B(G)\ar@{-}[dl] \\
& c(B(G))\ar@{-}[d] & \\
& 0 & 
}$$

$$\xymatrix@-0.5pc{
& \widehat G\ar@{-}[d] & \\
& c(B(G))^\perp=c(\widehat G)+B(\widehat G)\ar@{-}[dl]\ar@{-}[dr] &\\
B(G)^\perp=c(\widehat G)\ar@{-}[dr] & & c(G)^\perp=B(\widehat G)\ar@{-}[dl] \\
& G_1^\perp=c(B(\widehat G))\ar@{-}[d] & \\
& 0 &
}$$

\medskip
Invariance under conjugation was proved for the topological and the algebraic entropy respectively in \cite{B} and \cite{V}; namely,
if $G$ is a locally compact abelian group, $\phi:G\to G$ a continuos endomorphism, $H$ is another locally compact abelian group and $\xi:G\to H$ a topological isomorphism, then $$h_{top}(\phi) = h_{top}(\xi\phi\xi^{-1})\ \text{and}\ h_{alg}(\phi) = h_{alg}(\xi\phi\xi^{-1}).$$

So, in view of \eqref{3.3}, we have the following lemma needed in the proof of Theorem \ref{AT->BT}.

\begin{lemma}\label{pontr-rem}
Let $G$ be a locally compact abelian group, $\phi:G\to G$ a continuous endomorphism and $H$ a $\phi$-invariant closed subgroup of $G$. Let $\psi=\widehat\phi$, and $\overline \phi:G/H\to G/H$ and $\overline\psi:\widehat G/H^\perp\to \widehat G/H^\perp$ the continuous endomorphisms induced respectively by $\phi$ and $\psi$. Then:
\begin{itemize}
\item[(a)] $h_{top}(\widehat{\phi\!\restriction_H})=h_{top}({\overline\psi})$ and $h_{top}(\widehat{\overline\phi})=h_{top}({\psi\!\restriction_{H^\perp}})$;
\item[(b)] $h_{alg}(\widehat{\phi\!\restriction_H})=h_{alg}({\overline\psi})$ and $h_{alg}(\widehat{\overline\phi})=h_{alg}({\psi\!\restriction_{H^\perp}})$.
\end{itemize}
\end{lemma}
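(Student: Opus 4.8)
The plan is to derive all four equalities directly from the conjugation relations collected in \eqref{3.3} together with the conjugation invariance of both entropies recorded in the paragraph immediately preceding the lemma. First I would isolate the second line of \eqref{3.3}, which asserts precisely that $\overline\psi$ is conjugated to $\widehat{\phi\!\restriction_H}$ and that $\psi\!\restriction_{H^\perp}$ is conjugated to $\widehat{\overline\phi}$. These are exactly the two pairs of continuous endomorphisms whose entropies the lemma equates, so no further construction of intermediate maps is needed.

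Next I would invoke the fact, established in \cite{B} for $h_{top}$ and in \cite{V} for $h_{alg}$, that conjugated continuous endomorphisms share the same topological entropy and the same algebraic entropy. Applying this to the conjugacy $\overline\psi = \xi\,\widehat{\phi\!\restriction_H}\,\xi^{-1}$ for a suitable topological isomorphism $\xi$ yields simultaneously $h_{top}(\widehat{\phi\!\restriction_H}) = h_{top}(\overline\psi)$ and $h_{alg}(\widehat{\phi\!\restriction_H}) = h_{alg}(\overline\psi)$, which are the first halves of (a) and (b). Applying it to the conjugacy $\psi\!\restriction_{H^\perp} = \zeta\,\widehat{\overline\phi}\,\zeta^{-1}$ likewise gives $h_{top}(\widehat{\overline\phi}) = h_{top}(\psi\!\restriction_{H^\perp})$ and $h_{alg}(\widehat{\overline\phi}) = h_{alg}(\psi\!\restriction_{H^\perp})$, completing the second halves of (a) and (b).

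I do not expect any genuine obstacle here: the statement is essentially a packaging of \eqref{3.3} into entropy equalities. The only point deserving a line of care is that the conjugations appearing in \eqref{3.3} are conjugations by \emph{topological} isomorphisms in the sense defined at the close of Section \ref{pd-sec}, since conjugation invariance is asserted only for topological isomorphisms $\xi$. This is automatic, because \eqref{3.3} was obtained by applying the Pontryagin duality functor to the commutative diagrams, using the canonical topological isomorphisms in \eqref{iso-eq}; hence the conjugating maps are themselves topological isomorphisms and the invariance result applies verbatim.
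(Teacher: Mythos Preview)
Your proposal is correct and matches the paper's own argument exactly: the paper simply states that the lemma follows ``in view of \eqref{3.3}'' together with the conjugation invariance of $h_{top}$ and $h_{alg}$ recalled just before the lemma, which is precisely what you spell out. No additional idea is needed.
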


We are now in position to give the proof of Theorem \ref{AT->BT}, which is based on the scheme of fully invariant subgroups of $G$ just described, also in the diagrams:

\begin{theorem}
Assume that $\FIAT_{top}$ and $\FIAT_{alg}$ hold. If $G$ is a locally compact abelian group and $\phi:G\to G$ a continuous endomorphism, then $h_{top}(\phi)=h_{alg}(\widehat\phi)$.
\end{theorem}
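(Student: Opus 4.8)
The plan is to reduce the general case to the three settings in which the Bridge Theorem is already available — compact groups (Theorem \ref{DG-bt}), the groups $\R^n$ (Example \ref{Rn}), and totally disconnected groups (Theorem \ref{BT}) — by cutting $G$ along the chain of fully invariant closed subgroups
$$0\subseteq c(B(G))\subseteq c(G)\subseteq G_1\subseteq G$$
and invoking $\FIAT_{top}$ and $\FIAT_{alg}$ at each step. The successive subquotients are exactly of the three good types: $c(B(G))=c(K)$ is compact, $c(G)/c(B(G))\cong\R^n$, while $G_1/c(G)\cong B(G)/c(B(G))$ and $G/G_1$ are totally disconnected (the former even compactly covered), as recorded in the Remark preceding the statement and in the structural decomposition $G=\R^n\times G_0$.

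The engine of the argument is a stability property that I would isolate first. Let $W$ be a locally compact abelian group, $\alpha\colon W\to W$ a continuous endomorphism, and $H$ a fully invariant closed subgroup of $W$ such that the Bridge Theorem holds both for $\alpha\restriction_H$ and for the induced endomorphism $\overline\alpha$ on $W/H$; then it holds for $\alpha$. To prove this I would apply $\FIAT_{top}$ to $(W,\alpha,H)$ and $\FIAT_{alg}$ to $(\widehat W,\widehat\alpha,H^\perp)$ — the latter legitimate because $H^\perp$ is fully invariant in $\widehat W$ — obtaining
$$h_{top}(\alpha)=h_{top}(\alpha\restriction_H)+h_{top}(\overline\alpha),\qquad h_{alg}(\widehat\alpha)=h_{alg}(\widehat\alpha\restriction_{H^\perp})+h_{alg}(\overline{\widehat\alpha}).$$
By Lemma \ref{pontr-rem}(b) one has $h_{alg}(\overline{\widehat\alpha})=h_{alg}(\widehat{\alpha\restriction_H})$ and $h_{alg}(\widehat\alpha\restriction_{H^\perp})=h_{alg}(\widehat{\overline\alpha})$, so that the two right-hand sides agree term by term once the Bridge Theorem is used on $\alpha\restriction_H$ and on $\overline\alpha$; this yields $h_{top}(\alpha)=h_{alg}(\widehat\alpha)$.

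With this in hand I would descend the chain recursively. Applying the stability property with $W=G$ and $H=G_1$ reduces the theorem to the Bridge Theorem for $\phi\restriction_{G_1}$ and for $\overline\phi$ on the totally disconnected group $G/G_1$ (granted by Theorem \ref{BT}). Applying it again with $W=G_1$ and $H=c(G)=c(G_1)$ reduces the first of these to $\phi\restriction_{c(G)}$ and to the map induced on the totally disconnected, compactly covered group $G_1/c(G)$ (again Theorem \ref{BT}). A last application with $W=c(G)$ and $H=c(B(G))=B(c(G))$ reduces $\phi\restriction_{c(G)}$ to $\phi\restriction_{c(B(G))}$ on the compact group $c(B(G))$ (Theorem \ref{DG-bt}) and to the map induced on $c(G)/c(B(G))\cong\R^n$ (Example \ref{Rn}). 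Since all base cases hold, the stability property propagates the Bridge Theorem back up to $\phi$.

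The point requiring genuine care — and where I expect the real work to lie — is the legitimacy of each reduction step. One must verify that $G_1$, $c(G)$ and $c(B(G))$ are fully invariant in the successive ambient groups $G$, $G_1$ and $c(G)$; this follows from the functoriality of $c(-)$ and $B(-)$, using $c(G_1)=c(G)$ and $B(c(G))=c(B(G))$, so that each step is an instance of the assumed $\FIAT$. One must also identify the subquotients with the claimed model groups via $G=\R^n\times G_0$ and the annihilator relations $B(G)^\perp=c(\widehat G)$, $c(G)^\perp=B(\widehat G)$. The matching of the dual pieces rests entirely on Lemma \ref{pontr-rem}: confirming that its hypotheses apply verbatim at each level, with the dual restriction and induced maps playing the roles of $\psi\restriction_{H^\perp}$ and $\overline\psi$, is the only technically delicate ingredient.
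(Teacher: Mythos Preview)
Your proof is correct and follows essentially the same strategy as the paper: isolate the ``stability property'' (which the paper uses implicitly), then reduce via fully invariant closed subgroups to the three known cases --- compact, $\R^n$, totally disconnected --- matching the dual pieces through Lemma \ref{pontr-rem}. The only difference is that you insert an extra layer at $G_1=c(G)+B(G)$, splitting the totally disconnected quotient $G/c(G)$ into two totally disconnected pieces; the paper handles $G/c(G)$ in a single application of Theorem \ref{BT}, so your chain $0\subseteq c(B(G))\subseteq c(G)\subseteq G_1\subseteq G$ is one step longer than needed, but the argument is otherwise identical.
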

\begin{proof}
Let $\psi=\widehat\phi$.

First we prove the conclusion assuming that $G=\R^n\times K$, where $n$ is a non-negative integer and $K$ a compact abelian group. In this case $\widehat G=\R^n\times X$, where $X=\widehat K$ is a discrete abelian group. Then $c(\widehat G)=\R^n$ and so it is fully invariant in $\widehat G$; equivalently, $K$ is fully invariant in $G$. Let $\overline\phi:\R^n\to \R^n$ and $\overline\psi:X\to X$ be the continuous endomorphisms induced by $\phi$ and $\psi$ respectively. The situation is described by the following pair of corresponding diagrams.
$$
\xymatrix{
K\ar[d]_{\phi\!\restriction_{K}}\ar@{^{(}->}[r] & G \ar[d]^\phi\ar@{->>}[r] & \R^n\ar[d]^{\overline\phi} & & X & \widehat G\ar@{->>}[l] & \R^n \ar@{_{(}->}[l] \\
K\ar@{^{(}->}[r] & G\ar@{->>}[r] & \R^n & & X\ar[u]^{\overline\psi} & \widehat G \ar[u]^\psi\ar@{->>}[l] & \R^n\ar[u]_{\psi\!\restriction_{\R^n}} \ar@{_{(}->}[l]
}
$$
Now $h_{top}(\phi\restriction_K)=h_{alg}(\overline\psi)$ by Theorem \ref{DG-bt} and $h_{top}(\overline\phi)=h_{alg}(\psi\restriction_{\R^n})$ by Example \ref{Rn}, where these two equalities hold also in view of Lemma \ref{pontr-rem}.
By $\FIAT_{top}$ and $\FIAT_{alg}$ we can conclude that $h_{top}(\phi)=h_{alg}(\psi)$, as required.

\smallskip
We pass now to the general case. Since $\widehat G$ is a locally compact abelian group, $\widehat G=\R^n\times G_0$, for some non-negative integer $n$ and some locally compact abelian group $G_0$ admitting a compact open subgroup $K$. Consider $B(\widehat G)$, which is compactly covered and fully invariant in $\widehat G$. Moreover, $B(\widehat G)=B(G_0)$ and $B(\widehat G_0)$ is open in $G_0$ as it contains the compact open subgroup $K$ of $G_0$; in particular, the quotient $X=G_0/B(G_0)$ is discrete and $\widehat G/B(\widehat G)=\R^n\times X$. It is worth to note that $c(G)^\perp=B(\widehat G)$ so that $\widehat G/B(\widehat G)\cong\R^n\times X$.
Let $\overline\phi:G/c(G)\to G/c(G)$ be the continuous endomorphism induced by $\phi$. Moreover, let $\overline\psi:\R^n\times X\to \R^n\times X$ be the continuous endomorphism conjugated to the one induced by $\psi$ on $\widehat G/B(\widehat G)\to \widehat G/B(\widehat G)$; this can be done without loss of generality in view of the previously described isomorphism between $\widehat G/B(\widehat G)$ and $\R^n\times X$ taking into account Lemma \ref{pontr-rem}. The situation is described by the following corresponding diagrams.
$$\xymatrix{
c(G)\ar[d]_{\phi\!\restriction_{c(G)}}\ar@{^{(}->}[r] & G \ar[d]^\phi\ar@{->>}[r] & G/c(G)\ar[d]^{\overline\phi} & & \R^n\times X & \widehat G\ar@{->>}[l] & B(\widehat G)\ar@{_{(}->}[l] \\
c(G)\ar@{^{(}->}[r] & G\ar@{->>}[r] & G/c(G) & & \R^n\times X\ar[u]^{\overline\psi} & \widehat G \ar[u]^\psi\ar@{->>}[l] & B(\widehat G) \ar[u]_{\psi\!\restriction_{B(\widehat G)}} \ar@{_{(}->}[l]
}$$
Now $c(G)\cong \R^n \times c(K)$, so $h_{top}(\phi\restriction_{c(G)})=h_{alg}(\overline\psi)$ by the first step of the proof, while $h_{top}(\overline\phi)=h_{alg}(\psi\restriction_{B(\widehat G)})$ by Theorem \ref{BT}, where these two equalities hold also in view of Lemma \ref{pontr-rem}. Hence, $\FIAT_{top}$ and $\FIAT_{alg}$ yield $h_{top}(\phi)=h_{alg}(\psi)$.
\end{proof}

\begin{remark}\label{LastRemark} 
We apply $\FIAT_{top}$ and $\FIAT_{alg}$ in the above proof only with respect to the fully invariant subgroups $c(G)$ and $B(G)$.
More precisely, we need  $\FIAT_{top}$ with respect to $c(G)$, while $\FIAT_{top}$
with respect to $B(G)$ is needed only in the class of connected locally compact abelian groups.
Analogously, $\FIAT_{alg}$ is needed with respect to $B(G)$, while $\FIAT_{alg}$
with respect to $c(G)$ is needed only in the class of locally compact abelian groups $G$ with trivial $B(G)$ (i.e., $G \cong \R^n\times X$, where $X$ is discrete and torsion-free).
\end{remark}

\end{document}